\documentclass[a4paper,11pt]{article}
\usepackage{hyperref}
\hypersetup{
	colorlinks=true,
	linktoc=all,
	linkcolor=black,     
	citecolor=blue,     
}
\usepackage{tikz}
\usepackage{graphicx}
\usepackage{latexsym, bm}
\usepackage{multicol}
\usepackage{indentfirst}
\usepackage{amssymb,amsfonts}
\usepackage{amsmath}
\usepackage{setspace}
\usepackage{float}
\newtheorem{theorem}{Theorem}[section]
\newtheorem{lemma}[theorem]{Lemma}

\newtheorem{definition}[theorem]{Definition}

\newtheorem{claim}[theorem]{Claim}

\newcommand{\ignore}[1]{}

\begin{document}
\begin{spacing}{1.02}
\date{}

\title{Star-critical Ramsey numbers involving large generalized books\footnote{Center for Discrete Mathematics, Fuzhou University,
Fuzhou, 350108, P.~R.~China. Email: {\tt 1075052058@qq.com, linqizhong@fzu.edu.cn, niulin2139@163.com.} Supported in part  by National Key R\&D Program of China (Grant No. 2023YFA1010202) and NSFC (No.\ 12571361).}}

\author{Peiyu Huang, \ \ \ Qizhong Lin, \ \ \ Lin Niu\footnote{Corresponding author.}}

\maketitle
\begin{abstract}
For graphs $F$, $G$, and $H$, we write $F \to (G, H)$ if every red/blue edge-coloring of $F$ contains either a red copy of $G$ or a blue copy of $H$. The Ramsey number $R(G, H)$ is the smallest integer $N$ such that $K_N \to (G, H)$. Let $H := K_k + nK_h$ be the generalized book graph, and let $G := K_{p+1}(a_1, a_2, \dots, a_{p+1})$ be a complete $(p+1)$-partite graph satisfying $a_1 = 1$, $a_2 \mid (nh-1)$, and $a_i \le a_{i+1}$.
In this paper, avoiding the use of Szemer\'{e}di's regularity lemma, we prove that for any fixed $h, p \ge 1$, $k \ge 2$, and sufficiently large $n$, $K_{p(nh + a_2k - 1) + 1} \setminus K_{1, nh - a_2(h-2) - 1}\to(G, H).$ This result yields the star-critical Ramsey number $r_*(G, H) = (p-1)(nh + a_2k - 1) + a_2(k + h - 2) + 1.$

\medskip

{\bf Keywords:} Ramsey number; Star-critical Ramsey number; Book; Stability-supersaturation lemma
\end{abstract}

\section{Introduction}

For graphs $F,G$ and $H$, we write $F \to (G, H)$ if any red/blue edge coloring of $F$ contains either a red copy of $G$ or a blue copy of $H$. The Ramsey number $r(G,H)$ is defined as the smallest integer $r$ such that $K_r\to(G, H)$. By definition, there exists a \emph{critical} red/blue edge-coloring of $K_{r-1}$ that contains neither a red copy of $G$ nor a blue copy of $H$; such a coloring is called a \emph{$(G,H)$-free coloring}.

A classical result of Chv\'atal \cite{c} states that $r(K_p,T_n)= (p-1)(n-1)+1$, where $T_n$ is a tree of order $n$.
For a graph $G$, let  $\chi(G)$ be the chromatic number of $G$, and let $s(G)$ be the minimum size of a color class over all proper vertex-colorings of $G$ by $\chi(G)$ colors. Burr \cite{Burr} first observed that if $G$ is a graph with chromatic number $\chi(G)=p$  and  $H$ is a connected graph of order $n\geq s(G)$, then
\begin{align}\label{burr}
r(G,H)\ge(p-1)(n-1)+s(G).
\end{align}

In 1983, Burr and Erd\H{o}s \cite{be} initiated the study of Ramsey goodness problems. A connected graph $H$ is called {\em $G$-good} if the equality of (\ref{burr}) holds.
Thus all trees are $K_p$-good from \cite{c}.
For more Ramsey goodness results,  we refer the reader to the survey \cite[Section 2.5]{cfs-15} and the references therein.

Let $B_n^{(k)}$ denote the book graph consisting of $n$ copies of $K_{k+1}$ sharing a common $K_k$ (the \emph{base} of the book).
Book graphs have attracted considerable attention in extremal graph theory, culminating in the recent breakthrough of Campos, Griffiths, Morris and Sahasrabudhe \cite{cgms}.

Nikiforov and Rousseau \cite{N-R} proved that for fixed $p\ge 2$ and $k\ge2$, $B_n^{(k)}$ is $K_{p+1}$-good for all sufficiently large $n$, i.e.,
 $$r(K_{p+1},B_n^{(k)})=p(n+k-1)+1.$$
Recently, a breakthrough of Fox, He and Wigderson \cite{fox} showed that the same assertion already holds under the weaker condition $n\ge2^{k^{10p}}$, and their proof avoids the use of the regularity lemma \cite{sz}. Their result can be stated as follows.
\begin{theorem}[Fox, He and Wigderson \cite{fox}]\label{fox}
For every $k, p, t\geq 2$, there exists $\delta>0$ such that the following holds for all large $n$. Let $1\leq a_1\leq \dots \leq a_{p-1}\leq t$ and $a_{p}\leq\delta n$ be positive integers. If $a_1=a_2=1$, then $r(K_p(a_1,\dots,a_{p}), B^{(k)}_n)= (p-1)(n+k-1)+1$.
\end{theorem}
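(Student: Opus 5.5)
The lower bound $r(K_p(a_1,\dots,a_p),B^{(k)}_n)\ge (p-1)(n+k-1)+1$ follows from the standard construction. Take $p-1$ disjoint blue cliques, each on $n+k-1$ vertices, and color every edge between different cliques red. The red graph is then $(p-1)$-partite, so it contains no red $K_p(a_1,\dots,a_p)$, which has chromatic number $p$. Each blue component has only $n+k-1<n+k=|B_n^{(k)}|$ vertices, so there is no blue $B_n^{(k)}$ either. The substance is the upper bound. For it I would take $N=(p-1)(n+k-1)+1$, fix a red/blue coloring of $K_N$ with no blue $B^{(k)}_n$, and aim to find a red $G=K_p(1,1,a_3,\dots,a_p)$.

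The argument would be a stability-plus-supersaturation scheme that avoids the regularity lemma, using induction on $p$. First, a blue $B_n^{(k)}$-free coloring means every blue $K_k$ has fewer than $n$ common blue neighbours. Via the book-Ramsey density results (Erd\H{o}s–Faudree–Rousseau–Schelp type counting), this forces the red graph to be dense, with many red $K_p$'s. Second, I would prove a supersaturation statement. If the red graph contains $\Omega(n^{p})$ copies of $K_p$, a Kővári–Sós–Turán/dependent-random-choice argument upgrades this to a red $K_p(1,1,a_3,\dots,a_{p-1},a_p)$ with $a_p\le \delta n$. The point is that each part of bounded size is cheap, and the last part of size $\delta n$ can be found by averaging over common red neighbourhoods of a fixed red $K_{p-1}(1,1,a_3,\dots,a_{p-1})$. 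Consequently either we are done, or the red graph has $o(n^p)$ copies of $K_p$.

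Third, the removal-type step. Without regularity, I would use a direct counting (Fox's removal-lemma-with-polynomial-bounds style) to deduce that the red graph is close to $(p-1)$-partite. This gives a partition $V_1,\dots,V_{p-1}$ in which almost all edges inside each $V_i$ are blue, and, using $N$ and the book condition, each $|V_i|\approx n+k-1$. Fourth, the cleanup. Move the few vertices with large red degree into their own part. Since $N$ exceeds $(p-1)(n+k-1)$, some part $V_i$ has at least $n+k$ vertices. Inside that part the blue graph is nearly complete, so it contains a blue $K_k$ whose common blue neighbourhood is large. Because blue books have fewer than $n$ pages, this forces red edges inside $V_i$, and indeed a red edge $uv$ with $u,v$ having many common red neighbours in every other part. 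This is where the condition $a_1=a_2=1$ enters: the red edge $uv$ supplies the two singleton parts. Then $a_3,\dots,a_p$ are embedded into the other parts using the near-completeness of red between parts, with $a_p\le\delta n$ fitting into the common red neighbourhood.

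I expect the main obstacle to be the third and fourth steps. The stability must hold with explicit (polynomial or tower-free) dependence, and the exact count $N=(p-1)(n+k-1)+1$ must be squeezed out of the near-extremal structure. In particular, I would need to rule out configurations where the extra vertex has blue neighbourhoods spread over several parts. I would first try to handle this by showing that any vertex with at least $\varepsilon n$ red neighbours in every part already completes a red $G$. Such vertices can then be reassigned, after which a pure counting argument on the sizes $|V_i|$ finishes the proof.
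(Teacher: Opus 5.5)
Your lower bound is correct, and your overall architecture (degree condition, supersaturation, stability, cleanup, exact count) is the one the paper uses in Section~3 for Theorem~\ref{new}. For $h=1$, $a_2=1$ and $p$ replaced by $p-1$, that theorem is exactly this statement. However, two steps do not go through as written.

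First, Step~1 asserts the wrong conclusion. The book condition cannot force $\Omega(n^p)$ red $K_p$'s. Add to the extremal colouring a vertex joined in red to everything: this gives a blue-book-free colouring of $K_N$ with only $O(n^{p-1})$ red $K_p$'s. Your own Step~2 also says a red-$G$-free colouring has $o(n^p)$ of them. What the proof needs, and what the stability lemma (Lemma~\ref{stability}) takes as its hypothesis, is a degree statement. All but $\alpha N$ vertices must have red degree at least $(1-\frac{1}{p-1}-\alpha)N$ (Lemma~\ref{Fan and Lin}), after which one restricts to the set $T$ of these vertices. This lemma uses the red and blue conditions jointly; blue book-freeness alone does not bound blue degrees, since a blue complete bipartite graph is $B^{(k)}_n$-free for $k\ge 2$. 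An appeal to ``EFRS-type counting'' does not supply it. Given this lemma, your Step~3 is fine in substance. One correction: Fox's removal lemma does not have polynomial bounds. Tower-free dependence comes from the stability--supersaturation lemma, and explicit bounds are irrelevant for the statement as posed anyway.

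Second, and more seriously, the endgame. With $N=(p-1)(n+k-1)+1$ there is no slack, so ``a blue $K_k$ whose common blue neighbourhood is large'' is not enough. Every vertex placed in $V_i$ must be blue to the spine $K\subseteq T_i$ with full weight. You need the $a_2=1$ case of Claim~\ref{ext-fhw}:
\begin{itemize}
\item[(i)] each $T_i$ is a blue clique;
\item[(ii)] each $v\in S$ either has \emph{no} red neighbour in some $T_i$, or has fewer than $\zeta|T_i|$ red neighbours in \emph{at least two} parts.
\end{itemize}
Your proposed fix (a vertex with many red neighbours in every part completes a red $G$) only yields one part with few red neighbours, which does not suffice. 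Consider a vertex with $1\le\deg_R(v,T_i)<\zeta|T_i|$ for a single $i$ and many red neighbours elsewhere. It can only be put in $V_i$, and a random spine in $T_i$ fails to be blue to it with probability up to about $k\zeta$. With up to $\gamma N$ such vertices, no counting of $|V_i|$ guarantees $n$ common blue neighbours.

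Such vertices must be excluded by the red-edge argument. Take $u\in N_R(v,T_i)$. Then $u$ is red to almost all of each other $T_j$, and $v$ has at least $\zeta|T_j|$ red neighbours there. So $u,v$ serve as the two singleton parts, and $K_{p-2}(a_3,\dots,a_p)$ embeds in their common red neighbourhoods. This is where $a_1=a_2=1$ really enters.

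Your Step~4 claim, that book-freeness forces a red edge inside $V_i$ whose endpoints have many common red neighbours in every other part, is exactly what needs this structure. A red edge forced inside $V_i$ may touch a vertex of $S$ with few red neighbours in some part.

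Finally, vertices with few red neighbours in two parts are placed in both, so the part sizes sum to $N+|S_2|$. Average over random $k$-sets $K\subseteq T_i$. Each such vertex then contributes at least $2(1-k\zeta)\ge 1$ in total. Hence the expected common blue neighbourhoods, summed over parts, are at least $N-(p-1)k=(p-1)(n-1)+1$, and some spine has at least $n$ common blue neighbours. The paper organizes this step as the two cases $|S_2|\ge 2(p+1)$ and $|S_2|<2(p+1)$.
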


Building on the work of Fox, He and Wigderson \cite{fox} and strengthening a result of Lin and Liu \cite{L-Liu}, Fan and Lin \cite{Fan and Lin} established the following theorem.
\begin{theorem}[Fan and Lin \cite{Fan and Lin}]\label{Fan-Lin}
For every $k, p, t\geq 2$, there exists $\delta>0$ such that the following holds for all large $n$. Let $1\leq a_1\leq \dots \leq a_{p-1}\leq t$ and $a_{p}\leq\delta n$ be positive integers. If $a_1=1$, then $r(K_p(a_1,\dots,a_{p}), B^{(k)}_n)\le (p-1)(n+k-1)+k(p-1)(a_2-1)+1$. The equality holds if $a_2\mid (n-1)$.
\end{theorem}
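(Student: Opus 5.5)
The statement has two parts: an upper bound for general $a_2$, and a matching lower-bound construction when $a_2\mid(n-1)$. I would do the construction first because it identifies the extremal structure the upper-bound argument must recover. Throughout, write $m=n+ka_2-1$ and note that
\[
(p-1)(n+k-1)+k(p-1)(a_2-1)=(p-1)m .
\]

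\textbf{Lower bound.} Assume $a_2\mid(n-1)$; then $a_2\mid m$. Take $p-1$ disjoint vertex sets $V_1,\dots,V_{p-1}$, each of size $m$. Inside each $V_i$, let the red graph be a disjoint union of $m/a_2$ copies of $K_{a_2}$, and color all other edges inside $V_i$ blue. Color every edge between different $V_i$ red.

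\emph{No blue book.} Blue edges lie only inside the $V_i$, and inside $V_i$ the blue graph is complete multipartite with parts of size $a_2$. A blue $K_k$ uses $k$ distinct parts, so its common blue neighbourhood has exactly $m-ka_2=n-1<n$ vertices. Hence there is no blue $B_n^{(k)}$.

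\emph{No red $K_p(1,a_2,\dots,a_p)$.} A red copy has $p$ classes spread over $p-1$ sets $V_i$, so two classes meet a common $V_i$. Those two classes span a red $K_{1,a_2}$ or a larger red complete bipartite graph with minimum degree at least $a_2$ inside $V_i$. This is impossible, since the red maximum degree inside $V_i$ is $a_2-1$. So $(p-1)m$ vertices do not suffice, giving the lower bound.

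\textbf{Upper bound.} Let $N=(p-1)m+1$ and suppose some coloring of $K_N$ has neither a red $G=K_p(1,a_2,\dots,a_p)$ nor a blue $B_n^{(k)}$. I would follow the regularity-free stability approach of Fox, He and Wigderson (Theorem~\ref{fox}), in four steps.

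\emph{Step 1.} Since every blue $K_k$ has fewer than $n\approx N/(p-1)$ common blue neighbours, a supersaturation or counting argument shows the red graph has few copies of $K_p$ relative to its $K_{p-1}$-structure. A stability lemma then yields a partition $V_1\cup\dots\cup V_{p-1}$ with $o(n^2)$ red edges inside parts and $o(n^2)$ blue edges between parts.

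\emph{Step 2.} Clean the partition so that every vertex has at most $\varepsilon n$ blue neighbours in each other part. Handle the few exceptional vertices separately, moving each to the part where it has fewest red neighbours, or showing it directly creates a red $G$.

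\emph{Step 3.} Show that inside each part the red maximum degree is at most $a_2-1$. Suppose $v\in V_i$ had $a_2$ red neighbours in $V_i$. The other $p-2$ parts have nearly complete red connections to $v$ and its neighbours. One can then greedily pick classes of sizes $a_3,\dots,a_p$ there; this is feasible because $a_3,\dots,a_{p-1}\le t$ and $a_p\le\delta n$. The result is a red $G$ with $v$ as the singleton class.

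\emph{Step 4.} By pigeonhole some part has $|V_i|\ge m+1$. Greedily build a blue $K_k$ in $V_i$: each chosen vertex excludes itself and at most $a_2-1$ red neighbours. The common blue neighbourhood therefore has size at least $m+1-ka_2=n$, which gives a blue $B_n^{(k)}$, a contradiction.

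\textbf{Main obstacle.} I expect the hard part to be Steps 1--2: proving stability without the regularity lemma, and controlling exceptional vertices precisely enough to get the exact bound rather than an asymptotic one. I would first try adapting the book-counting lemma of Fox--He--Wigderson essentially verbatim, since the condition $a_1=1$ is all that is changed.
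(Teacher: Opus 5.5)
\textbf{Upper bound.} Your plan has a genuine gap at Step 2, and Steps 3--4 depend on it. The stability lemma applies only after discarding the set $S$ of at most $\gamma N$ vertices of low red degree. These vertices cannot in general be reinserted so that every vertex has at most $\varepsilon n$ blue neighbours in each other part. A vertex $v\in S$ may have fewer than $\zeta|T_i|$ red neighbours in two different parts $T_i,T_j$ ($\zeta$ a small constant). Such a $v$ has too few red neighbours to anchor a red $G$, so nothing local rules it out, and wherever you place it, it is blue-heavy towards some other part.

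Without the cleaning property, Step 3 fails for exceptional vertices: the greedy red $G$ needs $v$ and its $a_2$ red neighbours to be almost completely red to the other parts. Step 4 fails too. A part may contain $\Theta(\gamma n)$ exceptional vertices, with arbitrary red edges among themselves and up to $\zeta|T_i|$ red neighbours each in $T_i$. So a greedily chosen blue $K_k$ in $V_i$ need not have $|V_i|-ka_2$ common blue neighbours.

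What one can actually prove is Claim~\ref{ext-fhw}. For $v\in T_i$ one has $\deg_R(v,T_i)\le a_2-1$. Each $v\in S$ either has $\deg_R(v,T_i)\le a_2-1$ for some $i$ (the set $S_1$), or has $\deg_R(v,T_i)<\zeta|T_i|$ for two indices (the set $S_2$).

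The missing idea is to exploit $S_2$ rather than clean it away. This is what Section 3 of the paper does in its proof of Theorem~\ref{new}, which for $h=1$ and $p$ replaced by $p-1$ is this statement.
\begin{itemize}
\item[(a)] Put each vertex of $S_2$ into \emph{both} parts where it has few red neighbours. Then $\sum_i|V_i|=N+|S_2|$, so some part satisfies $|V_h|\ge m+(1+|S_2|)/(p-1)$.
\item[(b)] Do not build the blue $K_k$ greedily in $V_h$. Instead take a blue clique $X\subseteq T_h$ with $|X|\ge |T_h|/a_2$ (Lemma~\ref{Turan} together with Claim~\ref{ext-fhw}(i)), and let $K$ be a random $k$-subset of $X$.
\item[(c)] The expected common blue neighbourhood of $K$ in $V_h$ is at least $|V_h|-ka_2-ka_2\zeta|S_2|-k(a_2-1)|S_1|/|X|$. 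The last term is a small constant, since $|S_1|\le\gamma N$ and $|X|=\Omega(N)$.
\item[(d)] The surplus from double counting pays for these losses, so some $K$ has $n$ common blue neighbours.
\end{itemize}
The paper treats separately the case of boundedly many $S_2$-vertices; there it picks $K\subseteq X$ blue-complete to every exceptional vertex of the part.

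Also, in Step 1 you have the roles reversed. The scarcity of red $K_p$'s comes from red $G$-freeness via Lemma~\ref{Erdos}. The book condition is what supplies the minimum-degree hypothesis of Lemma~\ref{stability}, and only for all but a few vertices.

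\textbf{Lower bound.} Your construction is the right one and the blue-book check is correct, but the red-$G$-freeness argument is not. Two classes meeting a common $V_i$ may each meet it in a single vertex. That gives only a red edge, which the construction contains, so no red $K_{1,a_2}$ is forced.

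A correct count runs as follows. If at least two classes meet $V_i$, their traces span a connected red subgraph of $V_i$. Hence they lie in a single red $K_{a_2}$ and contain at most $a_2$ vertices altogether. Let $P$ be the set of parts met by exactly one class. At most $|P|$ classes meet a part in $P$, so at least $p-|P|$ classes lie inside the at most $p-1-|P|$ shared parts. These classes have at least $1+a_2(p-1-|P|)$ vertices, since only one of them can be the singleton. But the shared parts contain at most $a_2(p-1-|P|)$ vertices of the copy, a contradiction.
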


A natural generalization of the book graph $B_n^{(k)}$ is the graph $K_k + nH$, obtained by replacing each vertex in the ``pages'' of the classical book by a copy of an arbitrary graph $H$. This construction preserves the essential book-like structure while allowing greater flexibility through the choice of $H$, making it a natural object for extending Ramsey-theoretic results. For the graph $K_k + nH$, it was shown in \cite{L-L-W3} that $K_k + nH$ is $(K_2 + G)$-good when $n$ is double-exponential in $k$, $|V(G)|$ and $|V(H)|$. Furthermore, by replacing $B_n^{(k)}$ with $K_k + nK_h$, Fan and Lin obtained the following generalization.
 \begin{theorem}[Fan and Lin \cite{Fan}]\label{new}
For every $k, t\geq 2$ and $h,p\ge 1$, there exists $\delta>0$ such that the following holds for all large $n$. Let $1\leq a_1\leq \dots \leq a_{p}\leq t$ and $a_{p+1}\leq\delta n$ be positive integers. If $a_1=1$ and $a_2\mid (nh-1)$, then
$r(K_{p+1}(a_1,\dots,a_{p+1}),K_k+nK_h)=p(nh+a_2 k-1)+1.$
\end{theorem}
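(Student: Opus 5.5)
The proof has two parts: a construction for the lower bound, and a stability argument for the upper bound $r(G,H)\le p(nh+a_2k-1)+1$, where $G=K_{p+1}(a_1,\dots,a_{p+1})$ and $H=K_k+nK_h$.

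\textbf{Lower bound.} Let $m=nh+a_2k-1$. Since $a_2\mid(nh-1)$, we have $a_2\mid m$. Take $p$ disjoint blocks $V_1,\dots,V_p$, each of size $m$, and colour every edge between different blocks red. Inside each block, partition $V_i$ into $m/a_2$ sets of size $a_2$. Colour edges inside these sets red and all other edges of the block blue. So the blue graph on a block is complete multipartite with parts of size $a_2$, and the red graph on a block has maximum degree $a_2-1$.

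There is no blue $H$. A blue $H$ lies in a single block, because blue edges never cross blocks and $H$ is connected. Its base $K_k$ meets $k$ distinct parts. The common blue neighbourhood of the base avoids those $k$ parts, so it has at most $m-a_2k=nh-1<nh$ vertices, too few for the pages.

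There is no red $G$. Suppose $G$ embeds in red, and place the vertex of the singleton part $a_1=1$ in some block. Since $G\supseteq K_{p+1}$ and there are only $p$ blocks, a counting argument should force that vertex to have $a_2$ red neighbours inside its own block. This contradicts red maximum degree $a_2-1$. I expect the counting to use that each block's red graph is a union of cliques of order $a_2$, so a clique of $G$ meets a block in at most $a_2$ vertices; this needs a short but careful check.

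\textbf{Upper bound.} Fix a colouring of $K_N$ with $N=p(nh+a_2k-1)+1$ and no red $G$, and follow the regularity-free stability--supersaturation scheme of Fox, He and Wigderson behind Theorem~\ref{fox}. The key steps, in order, are:
\begin{enumerate}
\item By supersaturation, few red $K_{p+1}$'s exist, since many would yield red $G$ because $a_{p+1}\le\delta n$. Hence either there is a blue $K_k$ whose common blue neighbourhood $W$ has $|W|\ge nh+\Omega(n)$, in which case the pages are found greedily as in step 3, or the red graph is close to a complete $p$-partite graph with parts $U_1,\dots,U_p$, each of size $nh+O(\varepsilon n)$.
\item In the near-extremal case, clean the partition by moving vertices so that each vertex has few red neighbours in its own part. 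Since $G$ is red-free, the red graph inside a part has all but $O(1)$ vertices of red degree at most $a_2-1$, as the other parts supply the remaining classes of $G$. By pigeonhole, some part $U$ has $|U|\ge nh+a_2k$.
\item Inside $U$, pick a blue $K_k$ avoiding the exceptional vertices. Its blue common neighbourhood $W\subseteq U$ loses at most $(a_2-1)k$ vertices to red neighbours, plus the base itself, so $|W|\ge |U|-a_2k\ge nh$. The blue graph on $W$ has minimum degree at least $|W|-a_2$. With $h$ fixed and $a_2\le t$, the Hajnal--Szemer\'edi theorem, or a greedy argument for large $n$, gives $n$ disjoint blue $K_h$'s. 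Together with the base this is a blue $H$.
\end{enumerate}

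\textbf{Main obstacle.} The hardest step is the exact counting in step 2, namely controlling the exceptional vertices with large red degree inside their part. The margin is exactly $a_2k$ vertices, so every vertex lost must be accounted for. I would handle this by showing that a vertex with $a_2$ red neighbours in its own part, together with near-complete red bipartite graphs to the other parts, already creates a red $G$. This should use $a_1=1$ and supersaturation to find the large classes $a_3,\dots,a_{p+1}$ in common red neighbourhoods. It would force exceptional vertices either to be absent or to be reassignable without shrinking the largest part below $nh+a_2k$.
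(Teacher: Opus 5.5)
Your lower bound is the paper's construction, and the check you defer does go through by counting. Call a block \emph{mixed} if it meets at least two classes of a red copy of $G$. A mixed block holds at most $a_2$ vertices of the copy, since they must all lie in one red $K_{a_2}$. If $m$ blocks are mixed, the others meet at most $p-m$ classes. So at least $m+1$ classes lie entirely in mixed blocks, which needs at least $1+ma_2$ vertices while only $ma_2$ are available. The upper bound, however, has a genuine gap at what you call the main obstacle, and the remedy you sketch cannot close it.

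The claim in step 2, that all but $O(1)$ vertices of a part have at most $a_2-1$ red neighbours in it, is not supported. Extending a red $K_{1,a_2}$ to a red $G$ through the other parts requires its vertices to be almost red-complete to every other part. That is known only for the high-red-degree vertices, the set $T$ of the paper. Your step 1 needs the minimum-degree lemma (Lemma~\ref{Fan and Lin}) before Lemma~\ref{stability} applies, and that lemma leaves a set $S$ of up to $\gamma N$ vertices, i.e.\ linearly many, whose red degree may be far below $(1-\frac1p)N$. Reassigning them between parts does not make them red-complete to the other parts, and nothing controls the red edges among them. So the part $U$ you get by pigeonhole may contain linearly many vertices of large red degree inside $U$, and step 3 fails twice.
\begin{itemize}
\item A base vertex taken from $T\cap U$ is controlled only inside $T\cap U$ and may be red to many vertices of $S\cap U$. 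Hence $|W|\ge |U|-a_2k$ is unjustified.
\item The blue graph on $W$ need not have minimum degree $|W|-a_2$, so Hajnal--Szemer\'edi does not apply.
\end{itemize}
Nor can you discard these vertices: when $|U|=nh+a_2k$ there is no slack, so in the worst case every vertex of $W$ must be covered by a page.

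The paper handles this with ingredients missing from your plan.
\begin{itemize}
\item Claim~\ref{ext-fhw}(ii) shows that each $v\in S$ has either at most $a_2-1$ red neighbours in some $T_i$, or fewer than $\zeta|T_i|$ red neighbours in two of the $T_i$; $v$ is placed accordingly.
\item If $|S_2|\ge 2(p+1)$, each $S_2$-vertex is counted in both of its parts, giving slack $|S_2|/p$ in the largest part.
\item Otherwise the base is chosen inside a Tur\'an blue clique of $T_i$ avoiding the at most $(a_2-1)|S_{1i}|+\zeta|T_i||S_{2i}|$ red neighbours in $T_i$ of the exceptional vertices. The base is then blue-complete to them.
\item For the pages (Claim~\ref{claim nh}) the key is the bound $\ell=r(K_h,G)\le (a+1)^h\delta n$ with $a=\sum_{i\le p}a_i$ (Claim~\ref{r}). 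Since there is no red $G$, one greedily extracts blue $K_h$'s from the exceptional vertices of $W$ until fewer than $\ell$ remain. Each leftover is completed with $h-1$ of its blue neighbours in $T_i$, using $r(K_{1,a_2},K_{h-1})=a_2(h-2)+1$, and the rest is tiled.
\end{itemize}
This Ramsey bound, not a minimum-degree argument, is also what your ``large common neighbourhood'' branch of step 1 needs, since there you know nothing about the blue graph on $W$.
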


A natural problem is whether one can remove a subgraph $F$ from $K_r$ such that $$K_r \setminus F \to (G, H)$$ still holds, where $K_r \setminus F$ denotes the graph obtained by deleting all edges of $F$ from $K_r$. In particular, when $F$ is a star, this leads to the definition of star-critical Ramsey number, which was first introduced by Hook and Isaak \cite{h-i}.

To be convenient, let $K_{r-1}\sqcup K_{1,t}$ denote the graph obtained from $K_{r-1}$ and an additional vertex $v$ by joining $v$ to $t$ vertices of $K_{r-1}$, $K_{r-1}\sqcup K_{1,t}=K_r\setminus K_{1,r-1-t}$.

 \begin{definition}
For $r=r(G,H)$, the star-critical Ramsey number is defined as
$$r_{\ast}(G,H)=\min\{t\mid K_{r-1}\sqcup K_{1,t}\to(G,H)\}.$$
\end{definition}

Recently, researches on star-critical Ramsey numbers mainly focused on classic book graphs, complete graphs, cycles, fans, and other graphs (e.g., \cite{h-l-1,h-l,L-L-W1,L-L-W2,l-c,w-l-l,w-s-r,z-b-c}).

From the above, we know that if $r(G,H)= (\chi(G)-1)(n-1)+s(G),$ then
$H$ is $G$-good.
Under all proper vertex-colorings of $G$ with $|V_1|=s(G)$ and all the color classes $V_1,\ldots,V_{\chi}$, denote $\tau(G)$ by the minimum degree of some vertex of $V_1$ that is adjacent to $V_i$ for $2\le i\le \chi$. A lower bound for the star-critical Ramsey number $r_{\ast}(G,H)$ is as follows, and a similar bound appears in \cite[Theorem 3]{z-b-c} due to Zhang, Broersma and Chen.

 \begin{lemma}[Hao and Lin \cite{h-l}]\label{lower}
Let $G$ be a graph with $\chi(G)\ge2$, and let $H$ be a connected graph of order $n\ge s(G)+1$ with minimum degree $\delta(H)$, if $H$ is $G$-good, then
\[
r_*(G,H)\ge(\chi(G)-2)(n-1)+\min\{n,\delta(H)+\tau(G)-1\}.
\]
Furthermore, if $H$ contains no vertex cut or $\delta(H)=1$, then
\[
r_*(G,H)\ge(\chi(G)-2)(n-1)+\min\{n,\delta(H)+\tau(G)-1\}+s(G)-1.
\]
\end{lemma}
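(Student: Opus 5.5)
The plan is to exhibit, for $r=r(G,H)=(\chi(G)-1)(n-1)+s(G)$, a red/blue coloring of $K_{r-1}\sqcup K_{1,t}$ with $t$ one less than the claimed bound that has no red $G$ and no blue $H$. Write $\chi=\chi(G)$. Start from Burr's critical coloring of $K_{r-1}$: take $\chi-1$ disjoint blue cliques $U_1,\dots,U_{\chi-1}$, each of order $n-1$, and one red-free set $W$ of order $s(G)-1$, i.e.\ a blue clique. Color all edges between different parts red. This coloring has no blue $H$, since $H$ is connected and every blue component has fewer than $n$ vertices. It also has no red $G$: a red copy would be properly $\chi$-colored by the parts, with one color class of size less than $s(G)$. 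For the first bound we may even simplify and take $W=\emptyset$, working on $K_{(\chi-1)(n-1)}$ plus extra vertices.

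\emph{First bound.} Add a new vertex $v$ and join it to the vertices as follows. Give $v$ blue edges to $\delta(H)-1$ vertices of one clique $U_1$, so that the blue component $U_1\cup\{v\}$ has order $n$. Give $v$ red edges to at most $\tau(G)-1$ further vertices of $U_1$, with the total number of edges into $U_1$ capped by $n-1$. Give $v$ red edges to all of $U_2,\dots,U_{\chi-1}$. This yields $(\chi-2)(n-1)+\min\{n-1,\delta(H)+\tau(G)-2\}$ edges at $v$, one less than the bound.

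There is no blue $H$. The only blue component of order at least $n$ is $U_1\cup\{v\}$, which has exactly $n$ vertices. In it, $v$ has blue degree $\delta(H)-1<\delta(H)$, so it cannot host $H$; when the cap $n-1$ is reached we argue in the same way.

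There is no red $G$. Any red $G$ must use $v$, since otherwise it lives in a $(\chi-1)$-partite red graph. In the red graph, $v$ together with $U_1$ forms an independent-set structure: $v$ has red neighbours in $U_1$, but $U_1$ is internally blue. So a red $G$ would give a proper $\chi$-coloring in which $v$ is forced into a class whose member has fewer than $\tau(G)$ neighbours in some other class. Here we use the definition of $\tau$: the class containing $v$ can be taken as $\{v\}$ or merged with a class in $U_1$. This is the step needing care, namely matching the definition of $\tau(G)$ (minimum degree to the other classes of a vertex in a minimum class) with the local structure around $v$. I expect this to be the main obstacle, since one must consider every embedding of $G$, including ones where $v$ lies in a large class of $G$.

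\emph{Second bound.} Now keep $W$ of order $s(G)-1$ and also join $v$ in red to all of $W$, which adds $s(G)-1$ edges. If $H$ has no cut vertex, a blue $H$ cannot pass through $v$ using both $W$ and $U_1$. So we may either make $v$'s edges to $W$ blue or red, arguing that the blue component through $v$ still cannot contain $H$ because $v$ would be a cut vertex. If instead $\delta(H)=1$, then $v$ sends no blue edges at all, and the blue bound is immediate. The red-$G$ check must then additionally rule out using $W\cup\{v\}$ as a small color class. This is where $s(G)$ enters: $W\cup\{v\}$ has size $s(G)$, so it can serve as the smallest class only if $v$ has at least $\tau(G)$ red neighbours in each other part. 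We arrange $v$'s red edges into $U_1$ to be fewer than $\tau(G)$, which prevents this.
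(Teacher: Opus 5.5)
\textbf{The second-bound construction fails.} You join $v$ to $W$ in red, and for $\delta(H)=1$ you insist $v$ sends no blue edges. Your red-side argument, that $W\cup\{v\}$ could be the smallest class only if $v$ had $\tau(G)$ red neighbours in every other part, assumes $W\cup\{v\}$ is red-independent. That is false once the $v$--$W$ edges are red. A red $G$ can then use $v$ with $G$-neighbours both in $W$ and in $A:=N_R(v)\cap U_1$, and this does happen.

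Take $\chi(G)=2$ and let $G$ be the connected bipartite graph with classes $\{g,p_1,p_2,c\}$ and $\{y_1,q_2,z_1,z_2,z_3\}$, and edges $gy_1$, $gz_1$, $p_iy_1$, $p_iq_2$ $(i=1,2)$, $cz_1$, $cz_2$, $cz_3$. Then $s(G)=4$ and $\tau(G)=2$. For $\delta(H)=1$ your $v$ is red to $W=\{w_1,w_2,w_3\}$ and to a single $a\in U_1$. The map $g\mapsto v$, $y_1\mapsto w_1$, $q_2\mapsto w_2$, $c\mapsto w_3$, $z_1\mapsto a$, $p_1,p_2,z_2,z_3\mapsto U_1\setminus\{a\}$ is a red $G$. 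The lemma's hypotheses can hold here, e.g.\ for $H=P_n$ with $n$ large, since long paths are $G$-good (Pokrovskiy--Sudakov). The ``red'' option you allow in the no-cut-vertex case fails the same way.

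\emph{Fix:} colour all $v$--$W$ edges blue in both subcases. The red graph is then exactly the one from the first bound. The hypotheses enter only on the blue side:
if $\delta(H)=1$, the blue component of $v$ is $W\cup\{v\}$, of order $s(G)\le n-1$; this is the one place $n\ge s(G)+1$ is used.
If $H$ has no cut vertex, $v$ is a cut vertex of its blue component $U_1\cup W\cup\{v\}$, because the $U_1$--$W$ edges are red. So a blue $H$ would lie in $U_1\cup\{v\}$, of order $n$, where $v$ has blue degree $\delta(H)-1$, or in $W\cup\{v\}$, of order $<n$.

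\textbf{The first bound: $W$ cannot be dropped, and keeping it resolves your ``main obstacle''.} You cannot take $W=\emptyset$ unless $s(G)=1$, because the host graph must be $K_{r-1}\sqcup K_{1,t}$ with $r-1=(\chi-1)(n-1)+s(G)-1$. Keep $W$ and make $v$ \emph{non-adjacent} to all of $W$; these are the deleted star edges.

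This is the idea your ``main obstacle'' is missing. Since $v$ has no edges to $W$, the red graph is $\chi$-partite with parts $U_1,\dots,U_{\chi-1},W\cup\{v\}$. A red $G$ must use $v$ and meet every part, so it inherits a proper $\chi$-colouring whose class inside $W\cup\{v\}$ has at most $s(G)$ vertices. Hence that class has exactly $s(G)$ vertices and contains $v$. But $v$ has at most $|A|\le\tau(G)-1$ neighbours in the nonempty class inside $U_1$, contradicting the definition of $\tau(G)$.

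You do not need to consider embeddings with $v$ in a large class, because the colouring of $G$ is the one induced by the host partition. Your suggested alternatives do not work. Taking $\{v\}$ as a singleton class alongside $W$ makes the red graph $(\chi+1)$-partite. And $v$ cannot be merged with the class in $U_1$, since it has red neighbours there.
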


In particular, Hao and Lin \cite{h-l} proved that $r_*(K_{p+1}, B_n^{(k)}) = (p-1+o(1))n$ for sufficiently large $n$, and they posed the problem of determining the exact value of $r_*(K_{p+1}, B_n^{(k)})$. Using the regularity lemma, Li and Li \cite{l-l} showed that $r_*(K_2 + G, B_n^{(k)}) = (\chi(G)+o(1))n$ for any fixed graph $G$ and large $n$. Recently, Chen, Lin and Niu \cite{chen} solved the problem of Hao and Lin by determining the exact value of $r_*(K_2 + G, B_n^{(k)})$ for any fixed $G$, and they established the following more general result.
\begin{theorem}[Chen, Lin and Niu \cite{chen}]\label{old}
For every $k, t\geq 2$ and $p\ge 1$, there exists $\delta>0$ such that the following holds for all large $n$. Let $1\leq a_1\leq \dots \leq a_{p}\leq t$ and $a_{p+1}\leq\delta n$ be positive integers. If $a_1=1$ and $a_2|(n-1)$, then
$$r_{\ast}(K_{p+1}(a_1,...,a_{p+1}),B_n^{(k)})=(p-1)(n+a_2k-1)+a_2(k-1)+1.$$
\end{theorem}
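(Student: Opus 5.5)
We prove the lower bound by extending an extremal coloring, and the upper bound by a stability argument applied to $K_{r-1}$, followed by an analysis of the added vertex $v$. Throughout, $G=K_{p+1}(a_1,\dots,a_{p+1})$ with $a_1=1$, $B=B_n^{(k)}$, and $r=r(G,B)=p(n+a_2k-1)+1$, which is Theorem~\ref{Fan-Lin} applied with $p+1$ parts.

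\emph{Lower bound.} We start from the $(G,B)$-free coloring of $K_{r-1}$ attaining Theorem~\ref{Fan-Lin}; this is where $a_2\mid(n-1)$ is needed. Its vertex set splits into $p$ parts $U_1,\dots,U_p$, each of size $m:=n+a_2k-1$. All edges between parts are red. Inside each part the coloring is mostly blue: the red edges form vertex-disjoint small cliques, arranged so that no red $K_{1,a_2}$ combines with the other parts into a red $G$, while every blue $K_k$ inside a part has fewer than $n$ common blue neighbours. We then add a vertex $v$ with $t=(p-1)m+a_2(k-1)$ neighbours:
\begin{itemize}
\item $v$ is joined by red edges to all of $U_2\cup\dots\cup U_p$;
\item $v$ is joined by blue edges to exactly $a_2(k-1)$ vertices of $U_1$, chosen to be a union of $k-1$ of the red blocks.
\end{itemize}
Since $v$ has no red neighbour in $U_1$, no red $G$ can use $v$ as its singleton class. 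Any blue book with $v$ in its base has its other $k-1$ base vertices inside the chosen blocks, and a count shows that their common blue neighbourhood has size less than $n$. This yields a coloring of $K_{r-1}\sqcup K_{1,t}$ with no red $G$ and no blue $B$, so $r_*(G,B)\ge t+1$.

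\emph{Upper bound, structure of $K_{r-1}$.} Now fix a coloring of $K_{r-1}\sqcup K_{1,t}$ with $t=(p-1)m+a_2(k-1)+1$, and assume it has no red $G$ and no blue $B$. We apply to $K_{r-1}$ the regularity-free stability–supersaturation lemma in the spirit of Fox, He and Wigderson (Theorem~\ref{fox}). It gives a partition $U_1,\dots,U_p$ with:
\begin{itemize}
\item $|U_i|=(1\pm\varepsilon)n$ for each $i$;
\item all but $\varepsilon n^2$ edges between parts are red;
\item blue degrees inside each part are almost complete.
\end{itemize}
We then clean the partition, moving vertices with atypical degrees, to get exact information. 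Every vertex of $U_i$ has at most $a_2-1$ red neighbours inside $U_i$; otherwise a red $K_{1,a_2}$ together with the red cross edges builds a red $G$, since $a_{p+1}\le\delta n$. Moreover $|U_i|=m$ for every $i$; otherwise some part is too large and yields a blue $B$ by counting common blue neighbours of a blue $K_k$.

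\emph{Upper bound, the vertex $v$.} If $v$ has at least one red neighbour in every part $U_i$, chosen as a typical vertex, then $v$ serves as the singleton class $a_1=1$ of a red $G$. So some part, say $U_1$, receives no red edge from $v$. Since $v$ has at most $(p-1)m$ neighbours outside $U_1$, it has at least $a_2(k-1)+1$ blue neighbours in $U_1$. Inside $U_1$ the red graph has maximum degree at most $a_2-1$, so among these blue neighbours we can greedily choose $k-1$ vertices that form a blue $K_k$ with $v$. We take them from $k-1$ distinct red components, using the extra $+1$ to beat the block structure. Their common blue neighbourhood in $U_1$, together with the blue neighbours of $v$, has size at least $m-a_2(k-1)-\cdots\ge n$, giving a blue $B_n^{(k)}$. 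This contradiction shows $r_*(G,B)\le t$.

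The main obstacle is this final exact counting step. We must show that $a_2(k-1)+1$ blue neighbours always suffice, even when the red graph inside $U_1$ is not exactly a union of $K_{a_2}$'s and the exceptional vertices from the stability step are present. I would first handle exceptional vertices by showing they can be placed in a part where they behave typically. Then I would count common blue neighbours of the chosen base via inclusion–exclusion over red neighbourhoods of size at most $a_2-1$.
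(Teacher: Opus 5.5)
Your lower-bound construction is the paper's (Section 4.1 with $h=1$; the paper obtains this statement as the case $h=1$ of Theorem~\ref{new star}, since $K_k+nK_1=B_n^{(k)}$). The upper bound breaks down at the last step, because you give the added vertex $v$ the wrong role in both branches of your dichotomy.

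\textbf{Blue branch.} You put $v$ in the base, so every page must be a blue neighbour of $v$. Nothing forces $v$ to have more than $a_2(k-1)+1$ neighbours in $U_1$. Take the extremal colouring with $v$ red to all of $U_2\cup\dots\cup U_p$ and blue to exactly $a_2(k-1)+1$ vertices of $U_1$. Your base has only $O(1)$ common blue neighbours, yet a blue book does exist. The paper uses $v$ as a \emph{page}. Since $a_2(k-1)+1=r(K_{1,a_2},K_k)$ and red degrees inside $U_1$ are at most $a_2-1$, the set $N_B(v)\cap U_1$ contains a blue $K_k$. This base has at least $|U_1|-ka_2=n-1$ common blue neighbours in $U_1$, and $v$ is the $n$-th page (Claim~\ref{claim-deg-R-2} with $h=1$). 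That is where the $+1$ is really used.

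\textbf{Red branch.} Your claim that one red neighbour in every part lets $v$ be the singleton class is false for $a_2\ge 2$. In the extremal colouring, let $v$ be red to $U_2\cup\dots\cup U_p$ and to a single vertex $u_1\in U_1$. Then $N_R(v)$ contains no red $K_p(a_2,\dots,a_{p+1})$; a counting over the parts, as in the proof that $G_0$ is $\mathcal{K}$-free, rules it out. The red copy that does exist has $u_1$ as the singleton and $\{v\}\cup N_R(u_1,U_1)$ as the class of size $a_2$. The class of size $a_{i+1}$ is then taken in $U_i$, inside the common red neighbourhood of everything chosen so far.

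Making this work needs three ingredients missing from your plan:
\begin{itemize}
\item[(a)] Every vertex has \emph{exactly} $a_2-1$ red neighbours in its own part (Claim~\ref{deg-v-U}). You only have ``at most''; ``exactly'' comes from $|U_i|=m$ together with blue-book-freeness.
\item[(b)] A per-vertex bound on blue degrees into other parts, valid for every vertex including the former exceptional ones (Claims~\ref{claim-deg-R} and~\ref{claim-deg-R-2}). ``All but $\varepsilon n^2$ cross edges are red'' does not give this.
\item[(c)] Red degree of $v$ linear in $n$ into all parts but one. ``At least one red neighbour in every part'' does not give this.
\end{itemize}

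The correct split is by blue degree, not by the presence of a red neighbour. Either $\deg_B(v,U_i)\ge a_2(k-1)+1$ for some $i$, which gives the blue book with $v$ as a page. Or $\deg_B(v,U_i)\le a_2(k-1)$ for all $i$. In that case $|N(v)|=(p-1)m+a_2(k-1)+1$ forces $\deg_R(v,U_i)\ge 1$ for every $i$, and $\deg_R(v,U_i)\ge (n+a_2)/2$ for all parts except the one with smallest red degree. This is exactly what the embedding with $u_1$ as the singleton requires.
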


In this paper we further generalize Theorem \ref{old}. Avoiding the use of the regularity lemma, we obtain the exact value of $r_*(K_{p+1}(a_1,\dots,a_{p+1}), K_k + nK_h)$. Our main result is as follows.
\begin{theorem}\label{new star}
For every $k, t\geq 2$ and $h,p\ge 1$, there exists $\delta>0$ such that the following holds for all large $n$. Let $1\leq a_1\leq \dots \leq a_{p}\leq t$ and $a_{p+1}\leq\delta n$ be positive integers. If $a_1=1$ and $a_2\mid (nh-1)$, then
$r_{\ast}(K_{p+1}(a_1,...,a_{p+1}),K_k+nK_h)=(p-1)(nh+a_2 k-1)+a_2(k+h-2)+1.$
\end{theorem}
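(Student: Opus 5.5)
Write $G=K_{p+1}(a_1,\dots,a_{p+1})$, $H=K_k+nK_h$ and $N=p(nh+a_2k-1)$. By Theorem \ref{new}, $r(G,H)=N+1$. The theorem therefore splits into two parts. The lower bound is a coloring of $K_N\sqcup K_{1,t}$ with $t=(p-1)(nh+a_2k-1)+a_2(k+h-2)$ that has no red $G$ and no blue $H$. The upper bound is the arrow relation stated in the abstract, $K_{N+1}\setminus K_{1,nh-a_2(h-2)-1}\to(G,H)$.

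\textbf{Lower bound.} Start from the extremal coloring behind Theorem \ref{new}. Take $p$ disjoint blocks $U_1,\dots,U_p$, each of size $nh+a_2k-1$, with all edges between blocks red. Inside each block, color so that the red graph is a disjoint union of $K_{a_2}$'s with blue complement. Since $a_2\mid(nh-1)$, we can take the block to be $(nh-1)/a_2+k$ copies of $K_{a_2}$. The red graph inside a block then contains no $K_{1,a_2}$, hence no red $K_{1+a_2}$-type obstruction. The blue graph is complete multipartite with parts of size $a_2$, and one checks it contains no $K_k+nK_h$.

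Now add the new vertex $v$. Join it in red to all of $U_1,\dots,U_{p-1}$. In $U_p$, join it to exactly $a_2(k+h-2)$ vertices, namely the union of $k+h-2$ of the red cliques $K_{a_2}$, and color all these edges blue.
\begin{itemize}
\item \emph{No red $G$.} A red copy of $G$ would need $v$ together with the other parts, but $v$ has red neighbours in only $p-1$ blocks. So $v$ cannot act as the singleton part, and by the same counting as in the original coloring no red $G$ arises.
\item \emph{No blue $H$ through $v$.} Such a copy lies in $U_p\cup\{v\}$. The blue neighbourhood of $v$ consists of $k+h-2$ parts of the multipartite blue graph. If $v$ is in the base $K_k$, we need at least $k-1$ base vertices and $h$ vertices of a page in $N(v)$, all in distinct parts, and there are too few parts. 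If $v$ lies in a page $K_h$, the same count applies to $h-1$ page vertices and $k$ base vertices.
\end{itemize}
This gives the lower bound. It matches the form of Lemma \ref{lower}, with $\delta(H)=k+h-1$ and $\tau(G)$ related to $a_2$.

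\textbf{Upper bound.} Suppose a coloring of $K_N\sqcup K_{1,t+1}$ has no red $G$ and no blue $H$. Apply Theorem \ref{new} in its stability form, via the Fox–He–Wigderson stability–supersaturation approach and without the regularity lemma. This shows the coloring on $K_N$ is close to the extremal one: there is a partition $V_1,\dots,V_p$ with $|V_i|=nh+a_2k-1\pm o(n)$, almost all cross edges red, and inside each part a red graph of tiny maximum degree.

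Next we refine. The absence of a red $G$, with $a_1=1$, forces each red graph inside $V_i$ to have no $K_{1,a_2}$, that is, red maximum degree below $a_2$. If some $V_i$ had more than $nh+a_2k-1$ vertices, the blue $H$ would appear: find a blue $K_k$ with large common blue neighbourhood, then greedily take $n$ disjoint blue $K_h$'s. This uses the divisibility $a_2\mid(nh-1)$ to pin down the exact sizes. So all parts have exactly the extremal size and exactly the extremal structure, up to the sizes of the red cliques.

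Finally we analyse $v$, which has $t+1$ neighbours.
\begin{itemize}
\item If $v$ has a red neighbour set in every $V_i$ of size large enough, namely at least $a_i$-type sets forming a clique structure with the cross edges, we embed a red $G$ with $v$ as the singleton part $a_1=1$.
\item Otherwise there is some $V_i$ in which $v$ has few red neighbours. The degree count $t+1$ then forces $v$ to have at least $a_2(k+h-2)+1$ neighbours in $V_i$, essentially all of them blue.
\end{itemize}
Since red components in $V_i$ have size at most $a_2$, these blue neighbours meet at least $k+h-1$ distinct red components. We then build a blue $K_k+nK_h$: place $v$ in the base, take the rest of the base and one page from the neighbours of $v$, and complete the remaining pages inside $V_i$ using near-completeness of the blue graph.

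\textbf{Main obstacle.} I expect the hardest step to be the exact stability. It must be carried out without regularity, and must reach exact part sizes and red degree below $a_2$, including the delicate handling of the $o(n)$ exceptional vertices. This is needed so that the final counting of $v$'s neighbours is sharp to the additive constant $a_2(k+h-2)$. I would adapt the Fan–Lin argument for Theorem \ref{new} and the Chen–Lin–Niu argument for Theorem \ref{old}, replacing pages $K_1$ by $K_h$.
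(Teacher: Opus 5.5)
Your lower-bound construction and its verification are the paper's. The skeleton of your upper bound (stability, then exact structure, then the extra vertex $v$) is also the paper's. The gap is in the final analysis of $v$: the additive constant $a_2(k+h-2)$ is sharp, and your case split does not respect it.

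\textbf{Your dichotomy for $v$ misses a configuration.} Counting non-neighbours only shows that $v$ has at least $a_2(k+h-2)+1$ neighbours in \emph{each} $V_i$. If $v$ has $s$ red neighbours in $V_i$ with $1\le s\le a_2-1$, you get only $a_2(k+h-2)+1-s$ blue ones there. Your own lower-bound construction shows that $a_2(k+h-2)$ blue neighbours forming $k+h-2$ red cliques force nothing.

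Here is a concrete instance with $a_2\ge 2$:
\begin{itemize}
\item Take the extremal colouring of $K_N$: blocks tiled by red $K_{a_2}$'s, all cross edges red.
\item Join $v$ in red to all of $V_2,\dots,V_p$.
\item Join $v$ to exactly $a_2(k+h-2)+1$ vertices of $V_1$: one red edge $vu_1$, and blue edges to $k+h-2$ whole red cliques.
\end{itemize}
The degree of $v$ is exactly $(p-1)(nh+a_2k-1)+a_2(k+h-2)+1$, and there is no blue $H$ at all. Also, $v$ cannot be the singleton part. Any copy of $K_p(a_2,\dots,a_{p+1})$ in $\{u_1\}\cup V_2\cup\dots\cup V_p$ must put at least $m+1$ parts, of total size at least $(m+1)a_2$, into $u_1$ together with the $m$ blocks meeting two parts. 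These hold at most $ma_2+1$ vertices of the copy, a contradiction. So neither of your cases fires.

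A red $G$ does exist, and finding it is the paper's argument. Take $u_1$ as the singleton part and $\{v\}\cup N_R(u_1,V_1)$ as the $a_2$-part. Then choose $a_3,\dots,a_{p+1}$ common red neighbours inside $N_R(v,V_2),\dots,N_R(v,V_p)$.

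The correct split is on blue degree. Either $\deg_B(v,V_i)\ge a_2(k+h-2)+1$ for some $i$, or $v$ has a red neighbour in every block and roughly $nh/2$ red neighbours in all but one block. The second branch needs two facts you do not establish:
\begin{itemize}
\item $\deg_R(u,V_i)\ge a_2-1$, hence $=a_2-1$, for every $u\in V_i$. This comes from blue-$H$-freeness: a vertex of red degree at most $a_2-2$ in $V_i$ lies in a blue $K_k$ with at least $|V_i|-k-(a_2-2)-(k-1)(a_2-1)=nh$ common blue neighbours.
\item A per-vertex bound on cross blue degrees, such as $\deg_B(u,V_j)\le a_2(k+h-2)$ for $j\ne i$. ``Almost all cross edges red'' in the density sense is not enough for the greedy extension.
\end{itemize}

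\textbf{Your blue $H$ puts $v$ in the wrong place.} With $v$ in the base, all $nh$ page vertices must be blue neighbours of $v$. But $v$ is only guaranteed $a_2(k+h-2)+1$ blue neighbours in $V_i$, so $v$ must be a page vertex instead.

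Since $r(K_{1,a_2},K_{k+h-1})=a_2(k+h-2)+1$ and the red maximum degree in $V_i$ is below $a_2$, there is a blue $K_{k+h-1}$ in $N_B(v,V_i)$. This needs only the degree bound, not your unproved claim that red components have size at most $a_2$. Take $k$ of its vertices as the base, and the remaining $h-1$ together with $v$ as one page. The base has at least $|V_i|-ka_2=nh-1$ common blue neighbours in $V_i$. That leaves at least, and possibly exactly, $h(n-1)$ vertices for the other $n-1$ pages. Because this count is tight, a plain greedy choice of blue $K_h$'s can get stuck at the end. You need a genuine tiling argument; the paper uses a large blue clique from Tur\'an's bound as a reservoir.

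Finally, the divisibility $a_2\mid(nh-1)$ plays no role in the upper bound. The exact sizes $|V_i|=nh+a_2k-1$ follow from $|V_i|\le nh+a_2k-1$ and $\sum_i|V_i|=N$.
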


The bounds on $1/\delta$ obtained in our proof are not of tower-type, since we avoid the regularity lemma.

\medskip
\noindent
{\bf Notation.} Given a graph $G = (V, E)$, we write $E(G)$ for its edge set and $e(G) = |E(G)|$. For disjoint subsets $A, B \subseteq V$, let $e_G(A, B)$ denote the number of edges between $A$ and $B$ in $G$. 
For a vertex $v \in V(G)$, we denote by $N(v, G)$ its neighborhood and by $\deg(v, G)$ its degree in $G$. 
For a subset $U \subseteq V(G)$, we write $N_G(v, U)$ for the set of neighbors of $v$ in $U$, and set $\deg_G(v, U) = |N_G(v, U)|$.
We use $G \setminus U$ and $G[U]$ to denote the subgraphs induced by $V(G) \setminus U$ and by $U$, respectively. 
Finally, $K_p(a_1, \dots, a_p)$ stands for the complete $p$-partite graph whose part sizes are $a_1, \dots, a_p$.

\section{Preliminaries}

The following lemma implies that if a graph $G$ on $n$ vertices contains $\Omega(n^p)$ copies of $K_p$, then it must contain a complete \(p\)-partite subgraph $K_p(a_1,\dots,a_{p})$ with one part of size linear in $n$. For a proof, we refer the reader to Fox, He, and Wigderson \cite{fox}.
\begin{lemma}[Erd\H{o}s \cite{erdos64}]\label{Erdos}
For every $\eta>0$, and integers $p,t\geq2$, and $1\leq a_1\leq \dots \leq a_{p-1}\leq t$, there exists $\delta>0$ such that the following holds for  all sufficiently large $n$. If $G$ is a $K_p(a_1,\dots,a_{p})$-free graph on $n$ vertices with $a_p\leq \delta n$, then $G$ has at most $\eta n^p$ copies of $K_p$.
\end{lemma}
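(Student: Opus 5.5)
We prove the contrapositive: if $G$ has more than $\eta n^p$ copies of $K_p$, then $G$ contains $K_p(a_1,\dots,a_p)$ with $a_p=\lceil\delta n\rceil$ for a suitable $\delta=\delta(\eta,p,t)>0$. Set $s=a_1+\dots+a_{p-1}\le (p-1)t$. Let $\mathcal H$ be the $p$-uniform hypergraph on $V(G)$ whose edges are the vertex sets of the $p$-cliques of $G$. Then $e(\mathcal H)>\eta n^p$, so $\mathcal H$ has edge density at least $\eta p!$. The key observation is a transfer from $\mathcal H$ back to $G$. Suppose $\mathcal H$ contains a complete $p$-partite $p$-graph $K^{(p)}(A_1,\dots,A_p)$, meaning every transversal $(x_1,\dots,x_p)\in A_1\times\dots\times A_p$ is an edge. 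Then any $x\in A_i$ and $y\in A_j$ with $i\ne j$ lie in a common transversal, which is a clique of $G$, so $xy\in E(G)$. Hence $G\supseteq K_p(|A_1|,\dots,|A_p|)$. It therefore suffices to find such a configuration with $|A_i|=a_i$ for $i<p$ and $|A_p|\ge\delta n$.

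\textbf{Step 1 (supersaturation).} Show that $\mathcal H$ contains at least $c\,n^{s+1}$ labelled copies of $K^{(p)}(a_1,\dots,a_{p-1},1)$, with $c=c(\eta,p,t)>0$. This is Erdős's 1964 hypergraph counting argument. Write $\hom(F,\mathcal H)$ for the number of homomorphisms of a $p$-graph $F$ into $\mathcal H$. By induction on the parts, repeatedly applying H\"older/Cauchy--Schwarz to blow up one part at a time, we obtain
\[
\hom\bigl(K^{(p)}(a_1,\dots,a_{p-1},1),\mathcal H\bigr)\ \ge\ (\eta p!)^{a_1\cdots a_{p-1}}\,n^{s+1}.
\]
The idea at each step is to fix the vertices in all parts but one, consider their common link, and note that the number of ordered $a_i$-tuples in that link is at least the $a_i$-th power of its size. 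Averaging then via convexity gives the bound. Non-injective homomorphisms number only $O(n^{s})$, so for $n$ large at least $\tfrac12(\eta p!)^{a_1\cdots a_{p-1}}n^{s+1}$ are genuine copies. I expect this step to be the main technical obstacle; specifically, the bookkeeping of the H\"older exponents, and the fact that an edge of $\mathcal H$ is an unordered set while the count is over ordered parts. I would handle the latter by passing to the ordered $p$-partite version, counting ordered $p$-tuples, which loses only a factor of $p!$.

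\textbf{Step 2 (pigeonhole).} Each copy found in Step 1 consists of a labelled choice of disjoint sets $(A_1,\dots,A_{p-1})$, with at most $n^{s}$ possibilities, together with one extra vertex $v$. Averaging over the at most $n^s$ choices of $(A_1,\dots,A_{p-1})$, some fixed choice extends to at least $c\,n$ vertices $v$, each forming with it a copy of $K^{(p)}(a_1,\dots,a_{p-1},1)$. Let $A_p$ be this set of vertices $v$; they are automatically outside $A_1\cup\dots\cup A_{p-1}$. Then every transversal of $(A_1,\dots,A_p)$ is an edge of $\mathcal H$, so by the transfer observation $G\supseteq K_p(a_1,\dots,a_{p-1},|A_p|)$ with $|A_p|\ge cn$.

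\textbf{Step 3 (conclusion).} Taking $\delta=c/2$, any $a_p\le\delta n$ satisfies $a_p\le |A_p|$, so $G$ contains $K_p(a_1,\dots,a_p)$. This contradicts $K_p(a_1,\dots,a_p)$-freeness. Hence a $K_p(a_1,\dots,a_p)$-free $G$ has at most $\eta n^p$ copies of $K_p$. Note that $\delta$ depends only on $\eta,p,t$, via the exponent $a_1\cdots a_{p-1}\le t^{p-1}$, and is polynomial in $\eta$. In particular, no regularity lemma is used.
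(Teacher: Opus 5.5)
Your proposal is correct and is essentially the standard argument behind this lemma. The paper gives no proof of its own and defers to Fox, He and Wigderson, whose route is the one you take: pass to the $p$-uniform hypergraph of $K_p$'s, use Erd\H{o}s's convexity count for $K^{(p)}(a_1,\dots,a_{p-1},1)$, pigeonhole over the bounded parts to get a linear last part, and transfer back to $K_p(a_1,\dots,a_p)$ in $G$. Your details (density $p!\,e(\mathcal H)/n^p\ge p!\eta$, exponent $a_1\cdots a_{p-1}\le t^{p-1}$, the $O(n^{s})$ non-injective maps, and $A_p$ being disjoint from the other parts) are all sound.
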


The following classical result guarantees a large independent set in any graph with low average degree.
\begin{lemma}[Tur\'{a}n \cite{Turan}]\label{Turan}
For any graph $G$ of order $n$ with average degree $d$, the independence number $\alpha$ is at least $\frac{n}{1+d}$.
\end{lemma}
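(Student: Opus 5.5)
The plan is to go through the Caro--Wei bound $\alpha(G)\ge \sum_{v\in V(G)} \frac{1}{\deg(v,G)+1}$ and then apply convexity to pass from individual degrees to the average degree $d$.

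\emph{Step 1 (Caro--Wei via a random ordering).} Choose a uniformly random linear ordering $\sigma$ of $V(G)$. Let $I_\sigma$ be the set of vertices $v$ that precede all of their neighbours in $\sigma$.
\begin{itemize}
\item $I_\sigma$ is independent. If $uv$ were an edge with $u,v\in I_\sigma$, then each of $u,v$ would have to precede the other, which is impossible.
\item Each vertex lies in $I_\sigma$ with probability $\frac{1}{\deg(v,G)+1}$. The event $v\in I_\sigma$ occurs exactly when $v$ comes first among the $\deg(v,G)+1$ vertices of $\{v\}\cup N(v,G)$, and each of these is equally likely to come first.
\end{itemize}
By linearity of expectation, $\mathbb{E}|I_\sigma|=\sum_v \frac{1}{\deg(v,G)+1}$. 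Hence some ordering gives an independent set of at least this size. A deterministic alternative is the greedy argument: repeatedly pick a vertex of minimum degree, put it in the independent set, and delete it together with its neighbours. One then checks that each round removes total weight $\sum \frac{1}{\deg+1}$ of at most $1$ from the original graph.

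\emph{Step 2 (convexity).} The function $x\mapsto \frac{1}{x+1}$ is convex on $x\ge 0$. By Jensen's inequality, or equivalently by Cauchy--Schwarz in the form
\[
\Big(\sum_v \tfrac{1}{\deg(v,G)+1}\Big)\Big(\sum_v (\deg(v,G)+1)\Big)\ge n^2,
\]
we obtain
\[
\sum_v \frac{1}{\deg(v,G)+1}\ \ge\ \frac{n^2}{\sum_v(\deg(v,G)+1)}=\frac{n^2}{n d+n}=\frac{n}{1+d}.
\]
Here we used $\sum_v \deg(v,G)=nd$. This gives the claimed bound $\alpha\ge \frac{n}{1+d}$.

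No step is a real obstacle; this is a routine classical argument. The only point needing care is Step 1. In the probabilistic version the key observation is the symmetry among the $\deg(v,G)+1$ vertices of $\{v\}\cup N(v,G)$. In the greedy version the key check is that removing a minimum-degree vertex $u$ and its neighbours deletes at most $1$ unit of weight: each removed vertex $w$ has original degree at least $\deg(u)$, so the $\deg(u)+1$ removed vertices contribute at most $(\deg(u)+1)\cdot\frac{1}{\deg(u)+1}=1$.
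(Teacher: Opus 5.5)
Your proof is correct and complete. The random-ordering argument establishes the Caro--Wei bound $\alpha(G)\ge\sum_v \frac{1}{\deg(v,G)+1}$, and the Cauchy--Schwarz step with $\sum_v \deg(v,G)=nd$ gives exactly $\frac{n}{1+d}$.

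The paper gives no proof of this lemma; it only cites it as a classical result. So the useful comparison is with the derivation usually attributed to Tur\'an, which passes to the complement. If $\alpha(G)\le r$, then $\overline{G}$ is $K_{r+1}$-free, so Tur\'an's theorem gives $e(\overline{G})\le (1-\frac1r)\frac{n^2}{2}$. This rearranges to $d\ge \frac{n}{r}-1$, i.e.\ $r\ge\frac{n}{1+d}$. Your route is self-contained and yields the stronger degree-sequence bound as an intermediate step; Tur\'an's route instead reuses an extremal theorem.

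For the purposes of this paper, even less is needed. Every application of the lemma (to subgraphs of $R[T_h]$, and to the set $X$ in Claim~\ref{claim-deg-R-2}) concerns a red graph of maximum degree at most $a_2-1$. There the trivial greedy bound $\alpha\ge n/(\Delta+1)$ already suffices.

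One small precision in your greedy alternative: $\deg(u)$ must mean the degree of $u$ in the current graph, since that is how many neighbours are removed. The comparison you need is $\deg(w,G)\ge \deg_{\mathrm{cur}}(w)\ge\deg_{\mathrm{cur}}(u)$. With that reading, the claim that each round removes weight at most $1$ is correct.
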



We also require  the following {\em stability-supersaturation lemma} by Fox, He and Wigderson \cite[Theorem 3.1]{fox}. It is a key tool for describing the structure of graphs that forbid certain subgraphs. The lemma states that if a graph \(G\) has slightly fewer edges than the Tur\'{a}n graph for \(K_p\) and contains only a few copies of \(K_p\), then its structure must be close to that of a balanced complete \((p-1)\)-partite graph.


\begin{lemma}[Fox, He and Wigderson \cite{fox}]\label{stability}
For every $\varepsilon>0$ and every integer $p\geq 2$, there exist $\eta,\gamma>0$ such that the following holds for all $n\geq 10$. Suppose $G$ is a graph on $n$ vertices with minimum degree at least $(1-\frac{1}{p-1}-\gamma)n$ and at most $\eta n^p$ copies of $K_p$. Then there is a partition $V(G)=\sqcup_{1\le i\le p-1} V_i$, such that the following hold:

\medskip
(i) $\sum_{1\le i\le p-1}e(V_i)\le\varepsilon {n\choose 2}$.

(ii) $\big||V_i|-\frac{n}{p-1}\big|\leq\sqrt{2\varepsilon}n$.

(iii) $e_G(V_i,V_j)\geq (1-p^2\varepsilon)|V_i||V_j|$.

(iv) For each $v\in V_i$, $\deg_G(v,V_i)\leq \deg_G(v,V_j)$.

\medskip
Moreover, we may take $\gamma=\min\{\frac{1}{2p^2},\frac{\varepsilon}{2}\}$ and $\eta=p^{-10p}\varepsilon$.
\end{lemma}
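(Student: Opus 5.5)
The statement immediately above is Lemma~\ref{stability}, the stability-supersaturation lemma of Fox, He and Wigderson. The plan is to derive it by combining a Tur\'an-type counting argument (few copies of $K_p$ force few edges beyond the Tur\'an number) with an Erd\H{o}s--Simonovits type stability step. A final local-improvement step then yields the partition with the vertex-wise property (iv).

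\emph{Step 1: an edge bound.} The first step is to show that a graph with minimum degree at least $(1-\frac{1}{p-1}-\gamma)n$ and at most $\eta n^p$ copies of $K_p$ has at most $(1-\frac{1}{p-1}+O(\sqrt{\gamma}+\eta^{c}))\frac{n^2}{2}$ edges. Since the minimum degree already forces nearly this many edges, $G$ is then close to extremal. I would get this from a supersaturation count by induction on $p$. Fix an edge $uv$ and consider its common neighbourhood; the minimum-degree condition gives $|N(u)\cap N(v)|\ge (1-\frac{2}{p-1}-2\gamma)n$. Iterating this down a clique, every $K_{p-1}$ extends to many $K_p$ unless the vertices involved have few common neighbours. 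Double counting then shows that almost all copies of $K_{p-1}$ have tiny common neighbourhoods.

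\emph{Step 2: the partition and properties (i)--(iii).} Next choose $V_1,\dots,V_{p-1}$ to be a $(p-1)$-partition maximizing $e_G(V_1,\dots,V_{p-1})$, the number of crossing edges. A max-cut partition automatically satisfies (iv): if $\deg_G(v,V_i)>\deg_G(v,V_j)$ for some $v\in V_i$, moving $v$ to $V_j$ strictly increases the number of crossing edges. To obtain (i), I would apply the removal lemma with polynomial bounds, avoiding regularity. Concretely, one can greedily destroy all $K_p$ by deleting $O(\eta^{c}n^2)$ edges, using the high minimum degree: each edge lies in few $K_p$ on average, so the clique count drops fast. Then apply Andr\'asfai--Erd\H{o}s--S\'os or F\"uredi's stability to the resulting nearly extremal $K_p$-free graph. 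F\"uredi's bound says that a $K_p$-free graph with $t_{p-1}(n)-s$ edges can be made $(p-1)$-partite by deleting $s$ edges. This gives $\sum_i e(V_i)\le\varepsilon\binom n2$. Property (ii) follows because an unbalanced $(p-1)$-partition loses about $\sum_i (|V_i|-\frac{n}{p-1})^2/2$ crossing edges relative to the Tur\'an graph, contradicting the edge count from Step~1. Property (iii) follows the same way: any missing crossing edges also reduce the total below the lower bound forced by the minimum degree, so at most $p^2\varepsilon|V_i||V_j|$ pairs between $V_i$ and $V_j$ can be missing.

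\emph{Main obstacle.} The hard part is (i) without regularity, that is, getting a polynomial dependence $\eta=p^{-10p}\varepsilon$. For this I would rely on the minimum-degree hypothesis directly rather than on a removal lemma. Pick a $K_{p-1}$ whose vertices have small common neighbourhood; a typical one exists by Step~1. Define $V_i$ as the set of vertices with few neighbours in the $i$-th vertex's neighbourhood complement. Then verify, by counting $K_p$ through edges inside a part, that each internal edge creates about $n^{p-2}$ copies of $K_p$. This gives $\sum_i e(V_i)\lesssim \eta n^{p}/n^{p-2}$, which is $\le\varepsilon\binom n2$ for the stated $\eta$. Finally, pass to the max-cut refinement to secure (iv); this does not decrease the crossing-edge count, so (i)--(iii) are preserved.
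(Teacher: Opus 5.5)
The paper gives no proof of this lemma; it is quoted from Fox--He--Wigderson (Theorem 3.1), so your argument has to stand on its own. Its outer layer is fine. A max-cut partition gives (iv) and can only decrease $\sum_i e(V_i)$. Properties (ii) and (iii) then follow, for any partition satisfying (i), from the lower bound $e(G)\ge(1-\frac{1}{p-1}-\gamma)\frac{n^2}{2}$ supplied by the minimum degree; for example $\sum_i(|V_i|-\frac{n}{p-1})^2\le(\gamma+\varepsilon)n^2\le 2\varepsilon n^2$. Your Step 1 upper bound on $e(G)$ plays no role. So everything rests on exhibiting one partition with $\sum_i e(V_i)\le\varepsilon\binom{n}{2}$, and neither of your two routes does this.

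The removal route fails as stated. F\"uredi's theorem needs a $K_p$-free graph, and nothing justifies destroying all copies of $K_p$ by deleting $O(\eta^c n^2)$ edges. For that, each deleted edge must lie in many copies, which is the opposite of your heuristic. In general $K_p$-removal has no polynomial bounds (Ruzsa--Szemer\'edi, Behrend). In this minimum-degree regime a polynomial removal statement is essentially a consequence of the lemma itself, so invoking it is circular.

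The direct route is the right kind of idea but stops short of the difficult point. Averaging $p$ times the number of $K_p$ over the $\Omega_p(n^{p-1})$ copies of $K_{p-1}$ gives $x_1,\dots,x_{p-1}$ spanning a $K_{p-1}$ with common neighbourhood of size $\mu n$. Then $\sum_i\deg(x_i)\le(p-2)n+\mu n$ gives $\deg(x_i)\le(\frac{p-2}{p-1}+\mu+(p-2)\gamma)n$. With $A_i=V\setminus N(x_i)$, an edge $uw$ with $\deg(u,A_i),\deg(w,A_i)\le\beta n$ does lie in $\Omega_p(n^{p-2})$ copies of $K_p$. This follows from a greedy count inside $N(u)\cap N(w)\cap N(x_i)$, which stays above the $K_{p-2}$ threshold because $\gamma\le\frac{1}{2p^2}$ and $\beta,\mu$ are small.

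What is missing is a proof that your sets $V_i=\{u:\deg(u,A_i)\le\beta n\}$ cover $V$. A vertex with more than $\beta n$ neighbours in every $A_i$ lies in no $V_i$. Wherever it is placed it may carry about $n/(p-1)$ internal edges, and your sketch gives no bound on the number of such vertices. Bounding them by counting $K_p$'s through them requires their neighbours in different $A_j$ to be mostly mutually adjacent. That is cross-density between the $A_j$, which is part of what is being proved. With a single threshold the count does not even close: a vertex $y\in A_j$ with $\deg(y,A_j)\le\beta n$ may still miss about $(\beta+(p-1)\gamma+\mu)n$ vertices outside $A_j$, enough to absorb the $\beta n$ neighbours you are counting.

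The error budget is also tight. A crude treatment of exceptional vertices already costs about $\gamma n^2\approx\varepsilon\binom{n}{2}$ when $\gamma=\varepsilon/2$. For instance, there can be up to $((p-1)\gamma+\mu)n$ vertices lying in two of the $A_i$, and each, placed where it has fewest neighbours, may still have $n/(p-1)$ of them. So (i) still needs a genuine additional argument before this is a proof, such as a two-scale or inductive bootstrapping of the structure, or a sharp global edge accounting.
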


Finally, recall that $G_1=K_{p+1}(a_1,\dots,a_{p+1})$ and $G_2=K_k+nK_h$. We consider a graph $G$ of order $N \ge (p-1-\eta)(n+k)$ that is both $G_1$-free and $G_2$-free. The following lemma will be essential, which shows that almost all vertices of \(G\) have high degree.

\begin{lemma}[Fan and Lin \cite{Fan and Lin}]\label{Fan and Lin}
For every $k, p, t\geq 2$, and for any $0<\alpha\le \frac{1}{2}$ and sufficiently small $\eta>0$, there exists $\delta>0$ such that the following holds for all large $n$. Let $1\leq a_1\leq \dots \leq a_{p-1}\leq t$ and $a_p\leq\delta n$ be positive integers. If $G$ is a graph on $N\ge (p-1-\eta)nh$ vertices such that $G$ is $K_p(a_1,\dots,a_{p})$-free and its complement $\overline{G}$ is $K_k+nK_h$-free, then all but at most $\alpha N$ vertices of $G$ have degree at least $(1-\frac{1}{p-1}-\alpha)N$.
\end{lemma}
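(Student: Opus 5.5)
The plan is to argue by contradiction, coloring $E(G)$ red and $E(\overline{G})$ blue, and to exploit that $\overline{G}$ is $(K_k+nK_h)$-free through the common blue neighbourhoods of blue $K_k$'s. Suppose the set $B$ of vertices with $\deg(v,G)<(1-\frac1{p-1}-\alpha)N$ has $|B|>\alpha N$. Each $v\in B$ then has blue degree greater than $(\frac1{p-1}+\alpha)N$. Since $N\ge (p-1-\eta)nh$ and $\eta$ is small compared with $\alpha$, this is at least $nh+\beta N$ for some $\beta=\beta(\alpha,p)>0$. The goal is to show that such large blue neighbourhoods force a blue $K_k+nK_h$.

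\textbf{Step 1 (small blue common neighbourhoods).} Let $S$ be any blue clique and $U$ the common blue neighbourhood of $S$. I claim that if $|S|=k$, then $|U|<nh+\varepsilon N$, where $\varepsilon\to0$ as $\delta\to 0$.
\begin{itemize}
\item Take a maximal family of vertex-disjoint blue $K_h$'s inside $U$. Because there is no blue $K_k+nK_h$, this family has fewer than $n$ members, so it covers fewer than $nh$ vertices.
\item The remainder $U'$ contains no blue $K_h$. Hence the red graph on $U'$ has independence number less than $h$.
\item By Ramsey's theorem with the standard averaging over $R(p,h)$-subsets, red $G[U']$ then contains at least $c(p,h)|U'|^p$ copies of $K_p$.
\item If $|U'|\ge \varepsilon N$, this is at least $\eta' N^p$ copies. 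Lemma~\ref{Erdos}, with $\delta$ chosen small in terms of $\eta'$, then gives a red $K_p(a_1,\dots,a_p)$ with $a_p\le\delta N$, a contradiction.
\end{itemize}
The same argument with $|S|=k'<k$ shows that common blue neighbourhoods of blue $K_{k'}$'s are not constrained, so all the constraint sits at level $k$.

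\textbf{Step 2 (building a blue $K_k$ with a large common neighbourhood).} I will construct greedily a blue clique $v_1,\dots,v_k$ with common blue neighbourhood $U_j$ of size at least $nh+\beta_j N$, where $\beta_1=\beta$ and $\beta_{j+1}$ depends only on $\beta_j,p,h$. This contradicts Step 1 once $\varepsilon<\beta_k$.
\begin{itemize}
\item Start with $v_1\in B$.
\item Given $U_j$, I need a vertex $u\in U_j$ whose blue degree inside $U_j$ is at least $nh+\beta_{j+1}N$.
\item If no such $u$ exists, every vertex of $U_j$ has red degree at least $|U_j|-nh-\beta_{j+1}N$ inside $U_j$.
\item In that case, peel off a maximal blue $K_h$-packing of $U_j$. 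If this packing has at least $n$ members, combined with the blue clique built so far it already yields a long blue book-like structure, which I then complete using the vertices of $B$. Otherwise, as in Step 1, the remainder is red-dense with bounded independence number. Its red $K_p$-count is then forced to be $\Omega(N^p)$, contradicting Lemma~\ref{Erdos}.
\end{itemize}
To make the red-density bound strong enough, I would use the global structure of $G$. Lemma~\ref{Erdos} gives that $G$ has at most $\eta N^p$ copies of $K_p$. I then apply Lemma~\ref{stability} to the subgraph of high-degree vertices, which by assumption has at least $(1-\alpha)N$ vertices, and obtain a near-Turán $(p-1)$-partition $V_1,\dots,V_{p-1}$. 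A vertex of $B$ has red degree at most $(1-\frac1{p-1}-\alpha)N$, so it has blue degree at least about $\alpha N$ into at least one part $V_i$ in addition to its own part. Each $V_i$ is nearly blue-complete by (i) of Lemma~\ref{stability}, and $|V_i|\approx N/(p-1)\approx nh$. So a blue clique can be grown inside $V_i$ together with $v$. Its common blue neighbourhood then contains most of $V_i$ plus $\Omega(\alpha N)$ further vertices, which exceeds $nh+\varepsilon N$.

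\textbf{Main obstacle.} The hard step is Step 2: making the common blue neighbourhood stay above $nh$ by a linear margin while the clique grows to size $k$. The stability partition is the tool I would try first. Since the $V_i$ are almost blue cliques of size about $nh$ and the $\Omega(\alpha N)$ extra blue neighbours of $v\in B$ survive intersections up to losses of order $\sqrt{\varepsilon}N$, the constants $\gamma,\eta,\varepsilon$ in Lemma~\ref{stability} have to be chosen small relative to $\alpha$ and $1/k$. Then $\delta$ is chosen via Lemma~\ref{Erdos}.
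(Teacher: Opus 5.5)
\textbf{Verdict: there is a genuine gap in Step 2, and the fallback through Lemma~\ref{stability} is circular.} The paper does not prove this lemma (it is quoted from Fan and Lin), so I am judging your argument on its own.

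\textbf{Step 1 is sound.} It is a useful reduction: few red $K_p$'s rule out a linear-size set with no blue $K_h$. Hence every blue $K_k$ has fewer than $nh+\varepsilon N$ common blue neighbours.

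\textbf{Step 2 cannot work as designed.} The gap is structural, not technical. Step 2 tries to reach a contradiction from a single vertex $v_1\in B$, using $|B|>\alpha N$ only through an unspecified appeal to ``the vertices of $B$''. But one low-degree vertex is consistent with all the hypotheses. Take $p=3$, $k=2$, $h=1$ and $\alpha<1/4$. Let $A_1,A_2$ be blue cliques of size $(1-\eta/2)n$, with all edges between them red. Add a vertex $v$ that is blue to $A_1\cup W$ and red to $A_2\setminus W$, where $W\subseteq A_2$ and $|W|=(2\alpha+\eta)n$. Then:
\begin{itemize}
\item $N=(2-\eta)n+1$;
\item the red graph is triangle-free;
\item every blue edge has at most $|A_1|-1<n$ common blue neighbours;
\item $v$ has red degree $(1-2\alpha-\frac32\eta)n<(\frac12-\alpha)N$.
\end{itemize}
Start your greedy procedure at $v_1=v$. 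Then $U_1=A_1\cup W$, and every vertex of $U_1$ has blue degree less than $n$ inside $U_1$, so the step fails. You land in the branch where ``the packing has at least $n$ members'', holding only the star $K_1+nK_1$. The promised completion ``using the vertices of $B$'' has nothing to work with, and no contradiction exists.

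So no argument that handles one vertex of $B$ at a time can succeed. The contradiction must come from interactions among many vertices of $B$. Two heuristics point the same way:
\begin{itemize}
\item Two low-degree vertices on the same near-clique whose extra blue neighbourhoods overlap in $\gg\eta N$ vertices give a spine with about $nh$ common blue neighbours.
\item A vertex whose red neighbourhood meets every part linearly lies in only $O(N^{p-1})$ red $K_p$'s. This contradicts Lemma~\ref{Erdos} only after summing over $\Omega(\alpha N)$ such vertices.
\end{itemize}
This joint counting over $B$ is the missing idea.

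\textbf{The fallback via Lemma~\ref{stability} does not repair this.} That lemma needs minimum degree $(1-\frac1{p-1}-\gamma)$ times the order, with $\gamma\le\varepsilon/2$. Supplying exactly this hypothesis is the job of the present lemma: in the paper it is applied with $\alpha=\gamma$ to produce $T$, and only then is stability invoked. Using stability here is therefore circular.

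Concretely, you say the high-degree set ``by assumption has at least $(1-\alpha)N$ vertices''. Under your contradiction hypothesis it has fewer than $(1-\alpha)N$ vertices. Its induced minimum degree is only guaranteed to be $(1-\frac1{p-1}-\alpha)N-|B|$, a deficiency of order $\alpha$. You need $\gamma,\varepsilon\ll\alpha$, and one checks that this minimum degree is below $(1-\frac1{p-1}-\gamma)(N-|B|)$ whenever $\gamma<\alpha$.

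Even granting a partition, your claim about the common neighbourhood is false in general. You assert that the common blue neighbourhood of $v$ and a clique grown in $V_i$ contains most of $V_i$ plus $\Omega(\alpha N)$ further vertices. But the extra blue neighbours of $v$ lie in other parts and are red to that clique. This is exactly what happens in the example above, where $W$ is red to $A_1$.
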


\section{Proof of Theorem \ref{new}}

For completeness, we give a proof for Theorem \ref{new}.
\subsection*{3.1 Lower bound}

We shall prove $r(K_{p+1}(a_1,\dots,a_{p+1}),K_k+nK_h)\ge p(nh+a_2 k-1)+1$ where $a_1=1$. Let $F$ be the disjoint union of $\frac{nh-1}{a_2}+k$ copies of the complete graph $K_{a_2}$, and let $G_0$ be the graph obtained by taking $p$ disjoint copies $F_1,\dots,F_p$ of $F$ and joining every vertex of $F_i$ to every vertex of $F_j$ for all $i \neq j$.  

We first show that $G_0$ contains no $\mathcal{K}:=K_{p+1}\left(a_1,\dots,a_{p+1}\right)$ by induction on $p\ge1$. To be clear, let $V_i$, $1\le i\le p$, be the vertex set of $F_i$.  The base case when $p=1$ is clear, and we assume that there is no $\mathcal{K}$ for smaller $p$. Now suppose that, on the contrary, $G_0$ contains $\mathcal{K}$ as a subgraph, where $\{u_0\}$ corresponds to the class consists of the single vertex. By symmetry, we assume $u_0\in V_{p}$. Since $u_0$ is adjacent to all vertices of $V(\mathcal{K} )\setminus(\{u_0\}\sqcup N_{V_p}(u_0))$, the subgraph $\mathcal{K}\setminus(\{u_0\}\sqcup N_{V_p}(u_0))$ must be contained in $G_0\setminus V_p$. Note that $u_0$ has only $a_2-1$ neighbors in $V_p$ in $G_0$, we have $K_{p}(1,a_3,\dots,a_{p+1})\subseteq G_0\setminus V_{p}$, which is impossible from the induction hypothesis.

Now, let us check that $\overline{G_0}$ contains no $K_k+nK_h$. If there is a $K_k+nK_h$, then such a subgraph must be in $\overline{F_i}$ for some $1\le i\le p$, which is impossible since the number of the common neighbors of any clique of $k$ vertices in $\overline{F_i}$ is at most $a_2(\frac{nh-1}{a_2}+k-k)=nh-1$. Thus, the lower bound $r(K_{p+1}(a_1,\dots,a_{p+1}),K_k+nK_h)\ge p(nh+a_2 k-1)+1$ holds as desired.

\subsection*{3.2 Upper bound}

 We now turn to the upper bound. Let $N=p(nh+a_2k-1)+1$. For the complete graph $K_N$, denote by $R$ and $B$ the graphs induced by all red edges and blue edges respectively. Consider a $(\mathcal{K},K_k+nK_h)$-free coloring of $K_N$, i.e., the red graph $R$ contains no copy of $K_{p+1}(a_1,\dots,a_{p+1})$ and the blue graph $B$ contains no copy of $K_k+nK_h$. Lemma \ref{Fan and Lin} shows that at most $\gamma N$ vertices of $K_N$ have red degree at most $\lceil(1-\frac{1}{p}-\gamma)N\rceil.$
Let $T$ be the set of vertices of red degree greater than $\lceil(1-\frac{1}{p}-\gamma)N\rceil$, then the induced subgraph $R[T]$ has at least $(1-\gamma)N$ vertices, and thus its minimum degree is at least $(1-\frac{1}{p}-2\gamma)|T|$. 
 Lemma \ref{Erdos} and Lemma \ref{stability} imply that if $n$ is sufficiently large, then there exists a partition of $T$ into classes $T_1,T_2,\dots,T_p$ such that

\medskip
(i) $\sum_{1\le i\le p}e_R(T_i)\le\varepsilon {(1-\gamma)N\choose 2}$.

(ii) $\big||T_i|-\frac{(1-\gamma)N}{p}\big|\leq\sqrt{2\varepsilon}(1-\gamma)N$.

(iii) $e_R(T_i,T_j)\geq\left(1-(p+1)^2\varepsilon\right)|T_i||T_j|$.

(iv) For each $v\in T_i$, $\deg_R(v,T_i)\leq\deg_R(v,T_j)$.

\medskip
Moreover, we may take $\gamma=\min\{\frac{1}{4(p+1)^2},\frac{\varepsilon}{4}\}$ and $\eta=(p+1)^{-10(p+1)}\varepsilon$.

\medskip
Let $S=V(K_N)\setminus T$, where $|S|\leq \gamma N$. Adapting the argument from \cite[Theorem 1.4]{Fan and Lin} yields the following result.

\begin{claim}\label{ext-fhw}
Let $\zeta=4(p+1)^3t\sqrt\varepsilon$. For every $1\leq i\leq p$, we have the following properties.

\medskip

(i) For every $v\in T_i$, $\deg_R(v,T_i)\le a_2-1$.

(ii) For every $v\in S$, either $\deg_R(v,T_i)\le a_2-1$ for some $T_i$, or else there exist distinct $k,\ell$ such that $\deg_R(v,T_i)<\zeta|T_i|$ for $i=k,\ell$.
\end{claim}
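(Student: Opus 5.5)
We will prove both parts of Claim~\ref{ext-fhw} by contradiction. In each case a vertex with too many red neighbors produces a red $\mathcal{K}=K_{p+1}(a_1,\dots,a_{p+1})$, using that the red graph between different parts $T_i,T_j$ is almost complete. The main tool is a greedy ``common red neighborhood'' lemma. Since $e_R(T_i,T_j)\ge(1-(p+1)^2\varepsilon)|T_i||T_j|$, the following holds for any bounded family of vertices $u_1,\dots,u_s$ with $s\le (p+1)t$, each of red degree at least $\zeta|T_j|$ into a fixed $T_j$ and taken from parts other than $T_j$: all but a $\sqrt\varepsilon$-fraction of $T_j$ is typical with respect to the other parts. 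Choosing parts one at a time inside typical vertices, we can find, for any prescribed sets $X_j\subseteq T_j$ of size at least $\zeta|T_j|/2$, a red complete multipartite graph with parts of sizes $a_2,\dots,a_p$ inside distinct $X_j$'s, and a final part of size $a_{p+1}\le\delta n$ inside the remaining common neighborhood.

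This works because the common red neighborhood of $O(t)$ typical vertices in $T_j$ has size at least $|T_j|-O(pt\sqrt\varepsilon)|T_j|$, which stays linear. The last, large part is found by the Kővári--Sós--Turán type counting in Lemma~\ref{Erdos}, or directly by averaging over $a_p$-subsets, since $\delta$ is small. The constant $\zeta=4(p+1)^3t\sqrt\varepsilon$ is chosen exactly so that removing the atypical vertices, about $(p+1)^2 t\sqrt\varepsilon|T_j|$ of them, from a set of size $\zeta|T_j|$ still leaves a positive fraction.

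\emph{Part (i).} Suppose $v\in T_i$ has at least $a_2$ red neighbors in $T_i$. By property (iv), $\deg_R(v,T_j)\ge\deg_R(v,T_i)$, but we need linear red degree into the other parts. So we first show $\deg_R(v,T_j)\ge(1-\zeta)|T_j|$ for $j\ne i$, using the minimum red degree of vertices in $T$, which is at least $(1-\frac1p-2\gamma)N$, together with $|T_j|\approx N/p$. This forces $v$ to have at most about $(2\gamma+\sqrt{2\varepsilon})pN$ red non-neighbors outside $T_i$.

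It is not immediate that the $a_2$ red neighbors $W\subseteq N_R(v,T_i)$ are typical. If every $a_2$-subset of $N_R(v,T_i)$ fails, then $\deg_R(v,T_i)$ is small compared to $|T_i|$ but at least $a_2$. We handle this by picking $W$ among vertices of $T_i$ whose red degrees into each $T_j$ are at least $(1-\sqrt\varepsilon)|T_j|$. If $N_R(v,T_i)$ contains fewer than $a_2$ such vertices, we iterate via (iv) applied to the neighbors themselves; I expect this subcase to be the main obstacle.

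With $W$ chosen, take $\{v\}$ as the singleton class and $W$ as the $a_2$-class. The classes of sizes $a_3,\dots,a_{p+1}$ are placed in the common red neighborhoods of $\{v\}\cup W$ inside $T_j$, $j\ne i$, one per part, with the largest class last. There are $p-1$ parts other than $T_i$, and we need $p-1$ further classes, which matches. The result is a red $\mathcal{K}$, a contradiction.

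\emph{Part (ii).} Let $v\in S$. Suppose $\deg_R(v,T_i)\ge a_2$ for all $i$, and $\deg_R(v,T_i)\ge\zeta|T_i|$ for all but at most one index $i_0$. Put $v$ as the singleton class. Take an $a_2$-set of red neighbors of $v$ in $T_{i_0}$, or in any part if no such exceptional index exists, chosen typical as in part (i). Then embed the remaining classes greedily in the sets $N_R(v,T_j)$, $j\ne i_0$, each of size at least $\zeta|T_j|$, using the common neighborhood lemma. This gives a red $\mathcal{K}$, a contradiction, so (ii) holds.

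The delicate point throughout is guaranteeing typicality of the small sets drawn from possibly tiny neighborhoods, namely the $a_2$ vertices in $T_i$ or $T_{i_0}$. As in \cite{Fan and Lin}, I would try first to use the blue graph being $K_k+nK_h$-free. Atypical vertices have large blue degree into some other part, and few such vertices can exist without creating a blue $K_k+nK_h$ inside a part of size about $nh+a_2k$. Failing that, I would modify $W$ by swapping in typical vertices.
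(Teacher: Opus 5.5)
Your proposal has a genuine gap at the step you yourself flag, and it stems from an incorrect degree estimate earlier. You assert that every $v\in T_i$ satisfies $\deg_R(v,T_j)\ge(1-\zeta)|T_j|$ for $j\ne i$, using only the minimum degree of $R[T]$ and the part sizes. What that count actually gives is
\[
|T_j\setminus N_R(v)|\le(2\gamma+\sqrt{2\varepsilon})|T|+\deg_R(v,T_i).
\]
This bound is useless when $\deg_R(v,T_i)$ is linear, and the hypothesis of (i), namely $\deg_R(v,T_i)\ge a_2$, does not exclude that. For the same reason you cannot certify that the $a_2$ red neighbours $W\subseteq N_R(v,T_i)$ are typical, and the fixes you suggest do not work:
\begin{itemize}
\item $N_R(v,T_i)$ may consist of exactly $a_2$ vertices, so there is nothing to choose or swap.
\item ``Iterating via (iv)'' is not spelled out. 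By itself, (iv) only bounds a vertex's degree into other parts from below by its in-part degree.
\item The blue $K_k+nK_h$-freeness is not needed for this claim at all.
\end{itemize}

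The missing idea is a preliminary step that your own embedding lemma already proves: no vertex $u\in T_i$ has $\deg_R(u,T_i)\ge\zeta|T_i|$. Otherwise (iv) gives $|N_R(u,T_j)|\ge\zeta|T_i|\ge\zeta|T_j|/2$ for all $p$ indices $j$, including $j=i$. Placing the classes of sizes $a_2,\dots,a_{p+1}$ one in each $N_R(u,T_j)$ then produces a red $\mathcal{K}$ with $u$ as the singleton.

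Once this is known for every vertex of $T_i$, the displayed count shows that $v$ and every $w\in W$, however $W$ was chosen, miss at most $(2\gamma+\sqrt{2\varepsilon})|T|+\zeta|T_i|$ vertices of each $T_j$. So the common red neighbourhoods of $\{v\}\cup W$ cover almost all of each $T_j$, $j\ne i$, and (i) follows.

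For (ii) this $\zeta$-level deficiency is still too weak. $N_R(v,T_j)$ is only guaranteed to have $\zeta|T_j|$ vertices, and $a_2$ deficiencies of order $\zeta|T_j|$ can exhaust it. You need (i) itself: each $w\in T_{i_0}$ has $\deg_R(w,T_{i_0})\le a_2-1$. Hence $w$ misses at most $(2\gamma+\sqrt{2\varepsilon})|T|+a_2$ vertices of $T_j$, which is $O(p\sqrt{\varepsilon})|T_j|$. The factor $(p+1)^3t$ in $\zeta$ then keeps $\bigl|N_R(v,T_j)\cap\bigcap_{w\in W}N_R(w,T_j)\bigr|\ge\zeta|T_j|/2$. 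Your phrase ``chosen typical as in part (i)'' does not supply this.

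(The paper gives no proof of this claim beyond pointing to the argument of Fan and Lin.)
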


Based on Claim \ref{ext-fhw}, we first partition $S$ into subsets $S_1$ and $S_2$, where $S_1$ consists of vertices $v \in S$ such that $\deg_R(v, T_i)\le a_2-1$ for some $i$, and $S_2$ consists of the remaining vertices of $S$, namely those satisfying $\deg_R(v,T_i)<\zeta|T_i|$ for at least two choices of $1\leq i\leq p$. Moreover, \(S_1\) is divided into \(S_{11}, \dots, S_{1p}\) and \(S_2\) into \(S_{21}, \dots, S_{2p}\), both according to the smallest index \(i\) satisfying the respective degree condition. We then define $V=V(K_N)$ and $V_i=T_i\cup S_{1i}\cup S_{2i}$. Consequently, we obtain $V=\bigcup^{p}_{i=1}V_i$.

By a similar argument as in \cite[Theorem 1.3]{chung}, we obtain the following claim.
\begin{claim}\label{r}
Let $\delta>0$, $h$ and $t$ be fixed positive integers, and let $1= a_1\leq a_2\leq\dots\leq a_p\leq t$ and $a_{p+1}\leq\delta n$ be positive integers. Then $r(K_h,\mathcal{K})\le (a+1)^h\delta n$, where $a=\sum_{i=1}^{p}a_i$.
\end{claim}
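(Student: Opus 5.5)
We prove the bound $r(K_h,\mathcal{K})\le (a+1)^h\delta n$ by induction on $h$, in the style of the Erd\H{o}s--Szekeres argument for off-diagonal Ramsey numbers, as in \cite[Theorem 1.3]{chung}. Here $\mathcal{K}=K_{p+1}(a_1,\dots,a_{p+1})$, a red copy means a copy of $K_h$, and a blue copy means a copy of $\mathcal{K}$. Throughout we may assume $\delta n\ge 1$, since otherwise the part of size $a_{p+1}\le\delta n$ is empty and the statement degenerates. The natural intermediate quantity is the bound for $K_h$ versus the smaller complete multipartite graph
\[
\mathcal{K}':=K_p(a_1,\dots,a_p),
\]
all of whose parts have size at most $t$.

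For the base case $h=1$ there is nothing to prove, since $r(K_1,\mathcal{K})=1\le (a+1)\delta n$. For $h=2$, $r(K_2,\mathcal{K})=|V(\mathcal{K})|=a+a_{p+1}\le a+\delta n\le (a+1)^2\delta n$, using $\delta n\ge 1$.

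For the induction step, let $N=(a+1)^h\delta n$ and fix a coloring of $K_N$ with no red $K_h$ and no blue $\mathcal{K}$. Every vertex $v$ has red degree less than $r(K_{h-1},\mathcal{K})\le (a+1)^{h-1}\delta n$, since otherwise its red neighborhood contains either a red $K_{h-1}$, which together with $v$ gives a red $K_h$, or a blue $\mathcal{K}$. Hence the blue graph has minimum degree at least $N-(a+1)^{h-1}\delta n=(1-\frac{1}{a+1})N$.

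Next we find a blue $\mathcal{K}'$ inside which we can locate $\delta n$ common blue neighbors. Here is the counting step I would use. A greedy embedding in the blue graph, which has density at least $1-\frac{1}{a+1}$, shows that any $a$ vertices have at least $N-a\cdot\frac{N}{a+1}=\frac{N}{a+1}=(a+1)^{h-1}\delta n$ common blue neighbors. So we can embed a blue copy of $\mathcal{K}'$ vertex by vertex: each new vertex is chosen among the common blue neighbors of the previously chosen vertices lying in other parts. There are fewer than $a$ such vertices, and we also avoid the fewer than $a$ vertices already used, so the choice is possible.

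Let $W$ be the set of common blue neighbors of the $a$ vertices of this $\mathcal{K}'$, so $|W|\ge (a+1)^{h-1}\delta n - a$. If $W$ contains a blue independent set of size $\delta n$, that is, a red clique of size $\delta n$, then for $\delta n\ge h$ this gives a red $K_h$. Otherwise $W$ contains $\delta n$ vertices that, together with $\mathcal{K}'$, form a blue $\mathcal{K}$; in fact any $\delta n$ vertices of $W$ do this, since $W$ is blue-complete to $\mathcal{K}'$ and the last part of $\mathcal{K}$ needs no internal edges. Hence as soon as $|W|\ge \delta n$ we obtain a blue $\mathcal{K}$, a contradiction. The inequality $(a+1)^{h-1}\delta n-a\ge\delta n$ holds for $h\ge2$ once $\delta n$ is large, and the case $h=2$ was handled separately.

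The main obstacle is keeping the constant of the form $(a+1)^h$ and checking the common-neighborhood count, namely that the deficiency of at most $\frac{N}{a+1}$ per vertex over $a$ vertices still leaves $\frac{N}{a+1}$ common neighbors. This is exactly where the base $a+1$ arises. The small additive terms from excluding already used vertices are absorbed because $n$ is large.
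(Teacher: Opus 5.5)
Your proof is correct and is essentially the argument the paper intends. The paper gives no details beyond pointing to the Erd\H{o}s--Szekeres-type induction of \cite[Theorem 1.3]{chung}, and your induction on $h$ supplies exactly that: red degrees are below $r(K_{h-1},\mathcal{K})\le (a+1)^{h-1}\delta n$, so the $a$ vertices of a blue $K_p(a_1,\dots,a_p)$ have at least $N/(a+1)=(a+1)^{h-1}\delta n\ge a_{p+1}$ common blue neighbours, which completes a blue $\mathcal{K}$. Two cosmetic points: the case split about a red clique inside $W$ is superfluous, since any $a_{p+1}$ vertices of $W$ already finish the blue $\mathcal{K}$; and $\delta n\ge 1$ is forced by $1\le a_{p+1}\le\delta n$ rather than being an extra assumption, while the common-neighbourhood count rests on your minimum-degree bound, not on density.
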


Now, we write $\ell$ for $r(K_h,\mathcal{K})$ and consider two cases based on the size of $S_{2}$.

\medskip
{\em Case 1.} $|S_{2}|\ge2(p+1)$.

\medskip
 From Claim \ref{ext-fhw}(ii), we know that $S_2=\bigcup_{i=1}^{p} S_{2i}$. Now we partition $S_2$ in another way. Indeed, we partition $S_2$ into $p$ subsets $S_{21}',\dots,S_{2p}'$ such that a vertex $v\in S_{2}$ is put into $S_{2i}'$ for $1\leq i\leq p$ if and only if $i$ is the biggest index satisfying $\deg_R(v,T_i)<\zeta|T_i|$.
Note that the definitions of $S_{2i}$ and $S_{2i}'$ imply that $S_{2i}\cap S_{2i}'=\emptyset$, and hence $|S_{2i}|+|S_{2i}'|\le |S_2|$. Define
$V_i=T_i\cup S_{1i}\cup S_{2i}\cup S_{2i}'$. By the pigeon-hole principle, there exists some $h\in [p]$ such that
$$|V_h|\ge\frac{|V|+|S_2|}{p}\ge nh+a_2 k-1+\frac{|S_2|}{p}.$$

From Claim \ref{ext-fhw}(i) and Lemma \ref{Turan}, there exists a blue clique $B[X]$ with $X\subseteq T_h$ and $|X|\ge \frac{|T_h|}{a_2}$. We randomly and uniformly select a $k$-set from $X$, which is a blue clique.
By linearity of expectation, there exists a $k$-set $K \subseteq X$ whose common blue neighborhood $B_0$ in $T_h \cup S_{1h} \cup S_{2h} \cup S'_{2h}$ satisfies:
$$|B_0|\ge (|T_h|-ka_2)+(|S_{2h}|+|S_{2h}'|)(1-ka_2\zeta)+|S_{1h}|\left(1-k\frac{a_2-1}{|X|}\right)\ge nh,$$
where the last inequality follows since $|S_{2}|\ge 2(p+1)$ and $\zeta, \gamma$ are sufficiently small.

\begin{claim}\label{claim nh}
 $B_0$ contains $n$ disjoint blue cliques of size $h$.
 \end{claim}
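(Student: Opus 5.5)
We argue greedily, using Claim \ref{r} and the fact that the red graph contains no $\mathcal{K}$. Suppose we have already found $j<n$ vertex-disjoint blue copies of $K_h$ inside $B_0$, and let $W\subseteq B_0$ be the set of vertices not used by them. Then
$$|W|\ge |B_0|-jh\ge nh-(n-1)h=h .$$
A single step of this kind is too weak, since $\ell=r(K_h,\mathcal{K})$ may be as large as $(a+1)^h\delta n$, which is much bigger than $h$. So we do not apply Ramsey's theorem to $W$ alone. Instead we use the structure inside $B_0$.

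The structure is as follows. Every vertex $v$ of $B_0\cap T_h$ has at most $a_2-1$ red neighbours in $T_h$, by Claim \ref{ext-fhw}(i). Every vertex of $B_0\cap S_{1h}$ has at most $a_2-1$ red neighbours in $T_h$. Every vertex of $B_0\cap (S_{2h}\cup S'_{2h})$ has fewer than $\zeta|T_h|$ red neighbours in $T_h$. Hence the red graph restricted to $B_0$ is very sparse with respect to $T_h$. The only potential red density comes from the few vertices of $S$, which number at most $\gamma N$, and these have red degree into $T_h$ of at most $\zeta|T_h|$.

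The plan is to build the $n$ blue $K_h$'s in two phases.

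In the first phase we cover the vertices of $B_0\cap S$, which number at most $\gamma N$. Process them one at a time. For $v\in B_0\cap S$, its blue neighbourhood in the unused part of $B_0\cap T_h$ has size at least $(1-\zeta)|T_h|-O(\gamma N)$. In this set the red graph has maximum degree at most $a_2-1$. Lemma \ref{Turan} therefore gives a blue clique of size at least $|T_h|/(2a_2)\gg h$. Inside it we choose $h-1$ vertices, which together with $v$ form a blue $K_h$. Only $O(h\gamma N)$ vertices of $T_h$ are used in this phase.

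In the second phase, let $T'$ be the unused vertices of $B_0\cap T_h$. The red graph on $T'$ has maximum degree at most $a_2-1$. Its complement, the blue graph on $T'$, therefore has minimum degree at least $|T'|-a_2$. By the Hajnal--Szemer\'edi theorem, or directly by greedily extracting blue $K_h$'s while more than $(a_2-1)h+h$ vertices remain, we obtain $\lfloor (|T'|-a_2h)/h\rfloor$ disjoint blue $K_h$'s.

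The main obstacle is the count: the greedy step in phase two can strand up to about $a_2h$ vertices. Two remedies are available.

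The first remedy is to use the slack in the bound on $|B_0|$. The lower bound on $|B_0|$ contains the term $(|S_{2h}|+|S'_{2h}|)(1-ka_2\zeta)$. Together with the pigeonhole excess $|S_2|/p\ge 2(p+1)/p$, this slack should be made to absorb an additive $a_2h$. If it cannot, we re-estimate $|B_0|$ using $|V_h|\ge nh+a_2k-1+|S_2|/p$. This bound loses only $ka_2$ vertices in $T_h$ together with tiny fractions elsewhere, and so yields $|B_0|\ge nh+\Omega(|S_2|)$.

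The second remedy applies if the slack is only $O(1)$. In that case we use Hajnal--Szemer\'edi exactly. A graph on $m$ vertices with minimum degree at least $(1-1/h)m$ has a $K_h$-factor, up to divisibility, and this is easily satisfied since the blue minimum degree $|T'|-a_2$ is far above $(1-1/h)|T'|$. This leaves fewer than $h$ vertices unused. We then need $|B_0|\ge nh+h\cdot|B_0\cap S|$-type bookkeeping only to the extent already accounted for in phase one, and the count closes.
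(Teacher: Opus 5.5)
Your argument is correct once it rests on the second remedy, but it follows a different route from the paper.

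\textbf{The paper's route.} It first uses the Ramsey bound $\ell=r(K_h,\mathcal{K})$ to strip blue $K_h$'s out of $B_0\cap(S_{2h}\cup S'_{2h})$ until fewer than $\ell$ vertices remain, and attaches each leftover to $h-1$ vertices of $T_h$. It then finishes with an absorber. It takes a Tur\'an blue clique $B_1$ in the rest of $B_0\cap T_h$ and does a second Ramsey extraction on $Y_1\setminus B_1$. The fewer than $\ell$ survivors are attached to $B_1$, whose unused part is a blue clique and so splits into blue $K_h$'s, wasting fewer than $h$ vertices.

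\textbf{Your route.} You attach every vertex of $B_0\cap S$ directly. This is legitimate because $|S|\le\gamma N$ uses up only $(h-1)\gamma N\ll |T_h|$ vertices of $T_h$, so the Ramsey bound $\ell$ is not needed at all. You then tile the leftover $T'\subseteq T_h$, whose red graph has maximum degree at most $a_2-1$, by Hajnal--Szemer\'edi. Concretely, discard $|T'|\bmod h$ vertices; on the remaining $m'\ge a_2h$ vertices the blue minimum degree $m'-a_2\ge(1-1/h)m'$ gives a $K_h$-factor.

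\textbf{The count and the first remedy.} The count is simply $|B_0\cap S|+\lfloor(|B_0|-h|B_0\cap S|)/h\rfloor=\lfloor|B_0|/h\rfloor\ge n$, and you should state it that way. Your last sentence about bookkeeping is garbled and should be replaced by this identity. Drop the first remedy: it is superfluous, and it could not stand on its own. When $|S_2|$ is close to $2(p+1)$, the guaranteed excess of $|B_0|$ over $nh$ is only a small constant. That need not cover the up to $(h-1)a_2$ vertices a plain greedy can strand.

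\textbf{Comparison.} Your route is shorter and cleaner. The paper's is self-contained, needing only Tur\'an's bound rather than Hajnal--Szemer\'edi. You can keep yours elementary too by borrowing the paper's absorber in phase two:
\begin{itemize}
\item reserve a blue clique $B_1\subseteq T'$ with $|B_1|\ge |T'|/a_2$;
\item greedily remove blue $K_h$'s from $T'\setminus B_1$ until at most $(h-1)a_2$ vertices remain (any $(h-1)a_2+1$ vertices of $T_h$ contain a red-independent $h$-set);
\item attach each remaining vertex to $h-1$ of its blue neighbours in $B_1$;
\item cut the rest of $B_1$ into blue $K_h$'s.
\end{itemize}
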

{\bf Proof.} We may assume $|B_0 \cap (S_{2h} \cup S'_{2h})| \geq \ell$. Write $|B_0 \cap (S_{2h} \cup S'_{2h})| = \ell + n_1h + h'_1$, where $0 \leq h'_1 < h$. Since $B_0 \cap (S_{2h} \cup S'_{2h})$ contains no red $\mathcal{K}$, we can find at least $n_1 + 1$ disjoint blue $K_h$ within this set. Let $Y_0$ denote the remaining vertices in $B_0\cap (S_{2h}\cup S_{2h}')$  after deleting the vertices of these disjoint blue  $h$-cliques. Then $|Y_0|<\ell<(a+1)^h\delta n$ from Claim \ref{r}. Note that each vertex $v\in K$ has at most $a_2-1$ red neighbors in $T_{h}$, and $v\in Y_0$ has less than $ \zeta |T_h|$  red neighbors in $T_{h}$ by Claim \ref{ext-fhw}. Obviously, $\deg_B(v, T_h) > (1-\zeta)|T_h|$ for any $v \in Y_0$. Moreover, $(1-\zeta)|T_h| - ka_2 \geq (a_2(h-2)+1)\ell$, where the last inequality follows since $\delta$, $\zeta$, and $\gamma$ are sufficiently small. Noticing that $a_2(h-2)+1=r(K_{1,a_2},K_{h-1})$ and $R[T_{h}]$ contains no red $K_{1,a_2}$, any vertex $v\in Y_0$ and $h-1$ vertices of $B_0\cap(T_h\setminus K)$ form a blue clique of size $h$.  

Let $Y_1$ consist of the vertices in $B_0 \cap S_{1h}$ together with those remaining in $B_0 \cap (T_h \setminus K)$ after deleting the vertices of the disjoint blue $(h-1)$-cliques. Thus, $|Y_1|>|T_h|-ka_2-\ell(h-1)$.
Let $B_1$ denote a maximum blue clique in the remaining vertices in $B_0\cap (T_{h}\setminus K)$. Applying Lemma \ref{Turan}, we obtain that $|B_1|\ge(|T_h|-ka_2-(h-1)\ell)/a_2$.

 We firstly assume that \( |Y_1\setminus B_1| < \ell \). Since each vertex \( v \in Y_1 \setminus B_1 \) has at most \( a_2 - 1 \) red neighbors in \( B_1 \) and \( |B_1| >|Y_1|-\ell >|T_h|-ka_2-h\ell\geq (a_2 + h - 2)\ell \), we obtain that the vertices of $Y_1\setminus B_1$ together with their blue neighbors in \( B_1\) form $|Y_1\setminus B_1|$  blue cliques. Clearly, the remaining vertices in $B_1$ form a certain number of blue $h$-cliques. Therefore, \( B_0\) contains \( n \) disjoint blue cliques, each of size \( h \).

Now, we assume that $|Y_1\setminus B_1|\ge\ell $, and let $|Y_1\setminus B_1|=\ell+n_2h+h_2'$, where $0\le
h_2'<h$. Since there is no red $\mathcal{K}$, we then can find at least
$n_2+1$ disjoint blue $h$-cliques in $Y_1\setminus B_1$. Let $X_0$ denote the remaining vertices in $Y_1\setminus B_1$  after deleting the vertices of these disjoint blue $h$-cliques. Then $|X_0|<\ell<(a+1)^h\delta n$. From Claim \ref{ext-fhw}(i), each vertex $v\in X_0$ has at most $a_2-1$ red neighbors in $B_1$. It follows that there is a subset $B_2$ of $B_1$ consisting of vertices blue-adjacent to any vertex in $X_0$ satisfying
$$
|B_2|\ge |B_1|-(a_2-1)\ell\ge (|T_h|-ka_2-(h-1)\ell)/a_2-(a_2-1)\ell\ge(h-1)\ell
$$
where the last inequality follows from the fact that $ \delta$, $\zeta$, and$\gamma$ are sufficiently small. Thus any vertex $v\in X_0$ and $h-1$ vertices of $B_2$ form a
blue clique of size $h$. Let $B_3\subset B_2$ be the set  consisting of the vertices that have been accounted for.  The remaining vertices in $B_1\setminus B_3$ clearly form blue cliques. Since $|B_0|\ge hn$, we can obtain $n$ disjoint blue cliques of size $h$  as desired.  \hfill$\Box$

The above discussion implies that $K\cup B_0$ yields a $K_k+nK_h$ in $B$ with center $K_k$. This leads to a contradiction.

\medskip
{\em Case 2.} $|S_{2}|<2(p+1)$.

\medskip 

 Let $V_i$ be the largest part among $V_1,\dots,V_{p}$. Given $N=p(nh+a_2k-1)+1$, we have $|V_i|\ge nh+a_2 k$.
From Claim \ref{ext-fhw} (i) and Lemma \ref{Turan}, there exists a blue clique $B[X]$ with $X\subseteq T_i$ and $|X|\ge \frac{|T_i|}{a_2}$. By the definition of $S_{1i}$ and $S_{2i}$, we can find a blue clique $B[X']\subseteq B[X]$ such that any vertex of $X'$ is blue-adjacent to all vertices of $S_{1i}\cup S_{2i}$ with
$$|X'|\ge |X|-(a_2-1)|S_{1i}|-\zeta|T_i||S_{2i}|.$$

Since $(\frac{1}{p}-\gamma)N\le|V_i|-|S|\le|T_i|\le\frac{(1-\gamma)N}{p}+\sqrt{2\varepsilon}(1-\gamma)N$ and $|S_{2i}|<2(p+1)$, we obtain that $|X'|\ge k$ by noting that $\varepsilon$, $\gamma$ and $\zeta$ are sufficiently small.

Applying Claim \ref{ext-fhw} (i) again, we can find a blue \( k \)-clique \( K' \) in \( B[X'] \) such that \( K' \) is blue-adjacent to all but at most \( k(a_2 - 1) \) vertices of \( T_i \). Hence $K'$ has at least $|V_i|-k-k(a_2-1)\ge nh$ common blue neighbors which are denoted by $B_0'$. Let $B_1'$ denote the vertices in the blue clique $B[X\setminus K']$, then $|B_1'|\ge|T_i|/a_2-k$. Following an argument similar to that in Claim \ref{claim nh}, we show that $B_0'$ contains $n$ disjoint blue $h$-cliques, regardless of whether $|B_0' \setminus B_1'| \ge \ell$ or not. It follows that $K'\cup B_0'$ yields a $K_k+nK_h$ in $B$ with center $K_k$. The final contradiction completes the proof of Theorem \ref{new}.

\section{Proof of Theorem \ref{new star}}
It is known that for every $k, t\geq 2$ and $h, p\ge 1$, there exists $\delta>0$ such that the following holds for all large $n$. Let $1\leq a_1\leq \dots \leq a_{p}\leq t$ and $a_{p+1}\leq\delta n$ be positive integers. If $a_1=1$ and $a_2\mid (nh-1)$, then $r:=r(K_{p+1}\left(a_1,\dots,a_{p+1}\right), K_k+nK_h)= p(nh+a_2k-1)+1$. In the following, we will prove Theorem \ref{new star}.

\subsection*{4.1 Lower bound}

For the lower bound, we shall show a $(K_{p+1}\left(a_1,\dots,a_{p+1}\right), K_k+nK_h)$-free coloring of $G=K_{r-1}\sqcup K_{1,q}$, where $q=(p-1)(nh+a_2 k-1)+a_2(k+h-2).$ Define $F$ as $\frac{nh-1}{a_2}+k$ disjoint copies of the complete graph $K_{a_2}$, and let $G_0$ be the graph obtained from $p$ disjoint copies $F_1,F_2,\dots,F_p$ of $F$ by adding all edges between them. Moreover, let $H_1$ be the graph obtained from $G_0$ and a new vertex $u$ by adding all edges between $u$ and $\cup_{i=1}^{p-1} F_i$. Let $H_2$ be the graph obtained from $\overline{G}_0$ by adding all edges between $u$ and $k+h-2$ disjoint copies of $\overline{K}_{a_2}$ of $\overline{F}_p$. Obviously, $E(H_1)\cap E(H_2)=\emptyset$ and $H_1\cup H_2=G=K_{r-1}\sqcup K_{1,q}$.

We first show that $H_1$ contains no $\mathcal{K}:=K_{p+1}\left(a_1,\dots,a_{p+1}\right)$ (one can also see in \cite{Fan and Lin,L-Liu}). Clearly, such a $\mathcal{K}$ can not contain $u$. We now prove that $G_0=H_1\setminus\{u\}$ contains no $\mathcal{K}$ by induction on $p\ge1$. To be clear, let $V_i$, $1\le i\le p$, be the vertex set of $F_i$.  The base case when $p=1$ is clear, and we assume that there is no $\mathcal{K}$ for smaller $p$. Now suppose that, on the contrary, $G_0$ contains $\mathcal{K}$ as a subgraph, where $\{u_0\}$ corresponds to the class consisting of the single vertex. By symmetry, we assume $u_0\in V_{p}$. Since $u_0$ is adjacent to all vertices of $V(\mathcal{K})\setminus(\{u_0\}\sqcup N_{V_p}(u_0))$, the subgraph $\mathcal{K}\setminus(\{u_0\}\sqcup N_{V_p}(u_0))$ must be contained in $G_0\setminus V_p$. Note that $u_0$ has only $a_2-1$ neighbors in $V_p$ in $G_0$, then we have $K_{p}(1,a_3,\dots,a_{p+1})\subseteq G_0\setminus V_{p}$, which is impossible according to the induction hypothesis.

Now, let us check that $H_2$ contains no $K_k+nK_h$. If there is a $K_k+nK_h$, then such a subgraph must be in $\overline{F_i}$ for some $1\le i\le p$, which is impossible since the number of the common neighbors of any clique of $k$ vertices in $\overline{F_i}$ is at most $a_2(\frac{nh-1}{a_2}+k-k)=nh-1$. Thus, the lower bound $ r_{\ast}(K_{p+1}(a_1,\dots,a_{p+1}), K_k+nK_h)\ge q+1$ follows as desired.
\begin{figure}[H]
    \centering
    \includegraphics[width=0.9\textwidth]{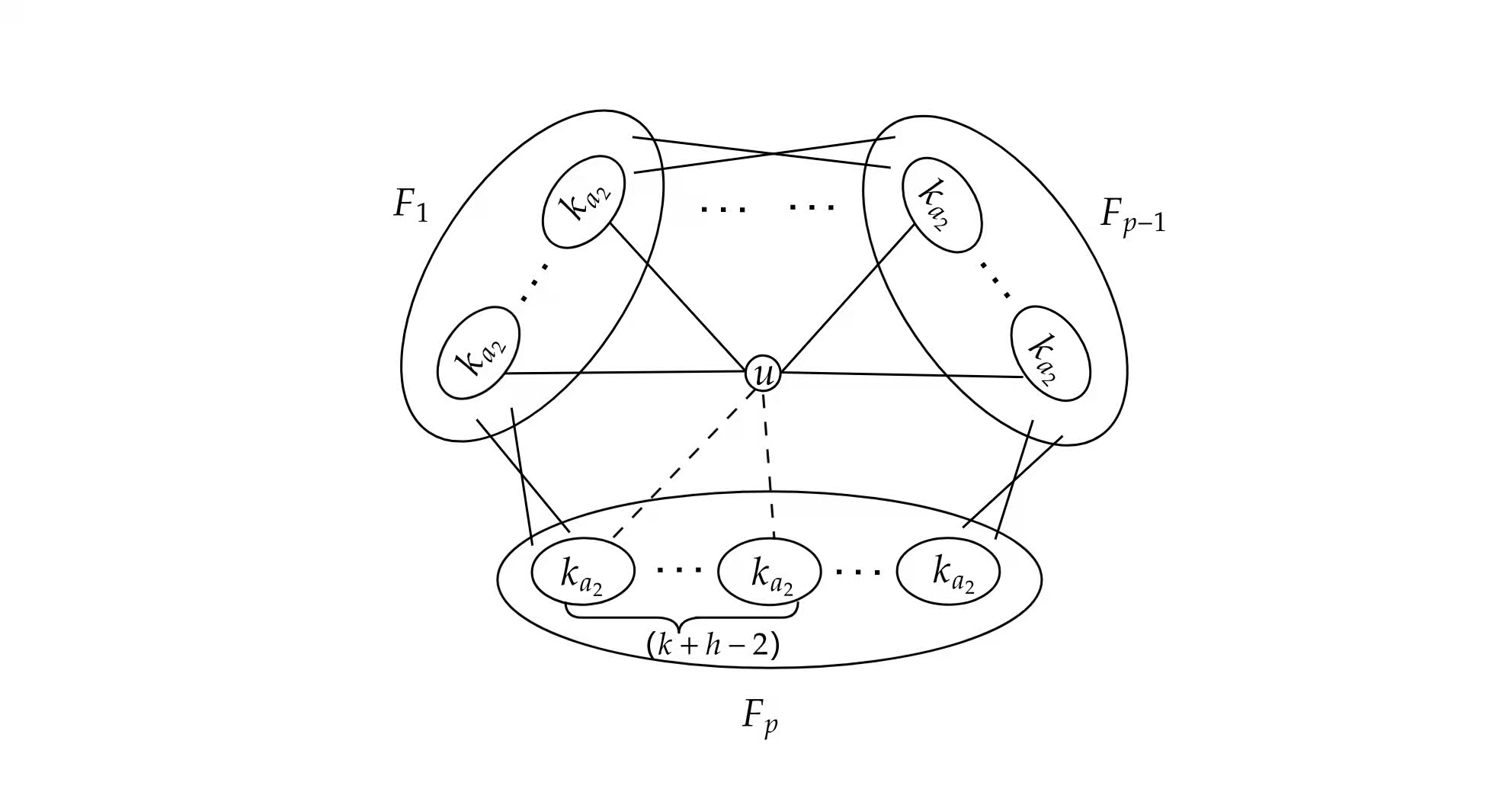}
    \caption{Construction of the graph $G$}
    \label{fig:1.}
\end{figure}

\subsection*{4.2 Upper bound}
 We now turn to the upper bound. Let $N=p(nh+a_2k-1)$. For the complete graph $K_N$, denote by $R$ and $B$ the graphs induced by all red edges and blue edges respectively. Consider a $(\mathcal{K},K_k+nK_h)$-free coloring of $K_N$, i.e., the red graph $R$ contains no copy of $K_{p+1}(a_1,...,a_{p+1})$ and the blue graph $B$ contains no copy of $K_k+nK_h$. Let $T$ be the set of vertices of red degree greater than $\lceil(1-\frac{1}{p}-\gamma)N\rceil$, and $S=V(K_N)\setminus T$. As in the proof of Theorem \ref{new}, there exists a partition of $T$ into classes $T_1,T_2,\dots,T_p$ for sufficiently large n. The same argument yields the following claim.
\begin{claim}\label{ext-fhw1}
Let $\zeta=4(p+1)^3t\sqrt\varepsilon$. For every $1\leq i\leq p$, we have the following properties.

\medskip

(i) For every $v\in T_i$, $\deg_R(v,T_i)\le a_2-1$.

(ii) For every $v\in S$, either $\deg_R(v,T_i)\le a_2-1$ for some $T_i$, or else there exist distinct $k,\ell$ such that $\deg_R(v,T_i)<\zeta|T_i|$ for $i=k,\ell$.
\end{claim}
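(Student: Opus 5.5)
We follow the argument of \cite[Theorem 1.4]{Fan and Lin} (which is itself modelled on \cite{fox}); the only changes are that now $N=p(nh+a_2k-1)$ rather than $p(nh+a_2k-1)+1$, and that the red graph is $K_{p+1}(a_1,\dots,a_{p+1})$-free while the blue graph is $(K_k+nK_h)$-free. Neither part of the argument uses the extra vertex, so both go through unchanged. Throughout, fix $\varepsilon$ small and take $\gamma,\eta$ as in Lemma \ref{stability}. The partition $T_1,\dots,T_p$ then satisfies (i)--(iv): each $|T_i|=(1/p\pm O(\sqrt\varepsilon))N$, and almost all pairs between distinct parts are red.

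\textbf{Part (i).} Suppose $v\in T_i$ has $a_2$ red neighbours $w_1,\dots,w_{a_2}$ in $T_i$. We aim to embed $\mathcal{K}$ with $v$ as the singleton class and $\{w_1,\dots,w_{a_2}\}$ as the class of size $a_2$.
\begin{enumerate}
\item Using (iv) and the minimum red degree $(1-\frac1p-\gamma)N$ on $T$, show that $v$ has red degree at least roughly $(1-O(\sqrt\varepsilon))|T_j|$ into every $T_j$ with $j\ne i$. The reason is that $\deg_R(v,T_i)\le\deg_R(v,T_j)$, and the total red degree forces near-completeness to each other part. The same holds for each $w_s$.
\item Hence the common red neighbourhood $W_j$ of $\{v,w_1,\dots,w_{a_2}\}$ in each $T_j$ ($j\neq i$) has size at least $(1-(a_2+1)O(\sqrt\varepsilon))|T_j|$.
\item By (iii), the red graph on $\bigcup_{j\ne i}W_j$ is nearly complete $(p-1)$-partite. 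It therefore contains $\Omega(n^{p-1})$ copies of $K_{p-1}$, and by Lemma \ref{Erdos} it contains $K_{p-1}(a_3,\dots,a_{p+1})$. (Here $a_{p+1}\le\delta n$, and the remaining parts have size at most $t$.)
\item Adding $v$ and $w_1,\dots,w_{a_2}$ completes a red $\mathcal{K}$, a contradiction.
\end{enumerate}
There is one subtlety. Lemma \ref{Erdos} needs $K_{p-1}$-density inside the union of the sets $W_j$, not just positive edge density. This density follows from (iii) by a standard greedy count, since each pair of parts misses only a $p^2\varepsilon$-fraction of edges.

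\textbf{Part (ii).} Let $v\in S$, and suppose $\deg_R(v,T_i)\ge a_2$ for all $i$. Suppose also that $\deg_R(v,T_i)\ge\zeta|T_i|$ for all but at most one index $i_0$. We derive a contradiction in the following steps.
\begin{enumerate}
\item In $T_{i_0}$, choose $a_2$ red neighbours of $v$ so that their common red neighbourhood is still large in every other part. This is possible because, by (i), vertices in $T_{i_0}$ behave as in the previous part, and in particular are red-complete to almost all of each $T_j$.
\item In each $T_j$ with $j\ne i_0$, restrict to the set of red neighbours of $v$, which has size at least $\zeta|T_j|$. Intersect it with the common neighbourhood of the chosen $a_2$ vertices; this loses only $O(\sqrt\varepsilon)|T_j|$ vertices.
\item Since $\zeta=4(p+1)^3t\sqrt\varepsilon$ dominates the error $p^2\varepsilon$ coming from (iii), the restricted $(p-1)$-partite red graph still has pair densities close to $1$ relative to part sizes of order $\zeta N$.
\item Supersaturation and Lemma \ref{Erdos} again give $K_{p-1}(a_3,\dots,a_{p+1})$. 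Together with $v$ and the $a_2$ chosen vertices, this forms a red $\mathcal{K}$, a contradiction.
\end{enumerate}

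\textbf{Main obstacle.} The delicate point is the calibration in part (ii). The deficiency in (iii) is measured relative to the full parts $|T_i||T_j|$, whereas the sets we work with have size only $\zeta|T_j|$. We must therefore check that the missing red edges, at most $p^2\varepsilon|T_i||T_j|$, are at most a small fraction of $\zeta^2|T_i||T_j|$. This holds because $\zeta^2\approx 16(p+1)^6t^2\varepsilon\gg p^2\varepsilon$. The choice of $\zeta$ is made exactly so that this comparison works, with the factor $t$ absorbing the at most $t$ vertices per bounded part that must be placed.
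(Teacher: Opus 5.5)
\textbf{The gap is in step 1 of part (i).} Part (ii) is fine once part (i) is available, but that first step does not follow from what you cite.

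Property (iv) of the partition says $\deg_R(v,T_i)\le\deg_R(v,T_j)$. This bounds the degrees into the other parts from below in terms of the internal degree. It does nothing to keep the internal degree itself small.

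Write $d=\deg_R(v,T_i)$. The minimum red degree $(1-\frac1p-2\gamma)|T|$ on $T$ and the balance of the parts only show that $v$ misses at most $(\sqrt{2\varepsilon}+2\gamma)|T|+d$ vertices of each $T_j$, $j\neq i$. That is near-completeness only if $d$ is already a small fraction of $|T_j|$. Consider a vertex red-adjacent to about a $(1-\frac1p)$-fraction of every part, its own included. It satisfies both (iv) and the degree condition. Property (i) of the partition bounds only the total number of internal red edges, not individual degrees. For $a_2+1$ such vertices the common red neighbourhood in $T_j$ can be tiny or empty, so your steps 2--4 have nothing to work with.

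\textbf{The missing ingredient} is an intermediate bound: $\deg_R(v,T_i)<\zeta|T_i|$ for every $v\in T_i$. Suppose it fails for some $v$.
\begin{itemize}
\item By (iv), $v$ has at least $\zeta|T_i|$ red neighbours in each of the $p$ parts.
\item By (iii), the sets $N_R(v,T_1),\dots,N_R(v,T_p)$ miss at most $(p+1)^2\varepsilon|T_j||T_{j'}|$ red edges between each pair. Relative to $|N_R(v,T_j)||N_R(v,T_{j'})|\ge\zeta^2|T_i|^2$, this is roughly a $(p+1)^2\varepsilon/\zeta^2=1/(16(p+1)^4t^2)$ fraction.
\item Hence these sets span $\Omega_\varepsilon(N^p)$ red copies of $K_p$.
\item Lemma~\ref{Erdos}, with $\eta$ and then $\delta$ chosen after $\varepsilon$, gives a red $K_p(a_2,\dots,a_{p+1})$ inside $N_R(v)$. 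Together with $v$ this is a red $\mathcal{K}$, a contradiction.
\end{itemize}

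This is exactly the calibration $\zeta^2\gg\varepsilon$ that you placed in part (ii). It is already needed in part (i), to rule out linear internal red degree.

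Once the intermediate bound holds for all vertices of $T$, in particular for $v$ and every $w_s$, each of them misses at most $(\sqrt{2\varepsilon}+2\gamma)|T|+\zeta|T_i|$ vertices of every other part. Your steps 2--4 then correctly give $\deg_R(v,T_i)\le a_2-1$. Part (ii) then goes through as you wrote it.
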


Based on Claim \ref{ext-fhw1}, we follow the same partition method described in Theorem \ref{new}. Then we have $S = S_1 \cup S_2$, where \(S_1\) consists of vertices with \(\deg_R(v, T_i) \le a_2 - 1\) for some \(i\), and \(S_2\) consists of vertices with \(\deg_R(v, T_i) < \zeta |T_i|\) for at least two indices \(i\). Moreover, \(S_1\) is divided into \(S_{11}, \dots, S_{1p}\) and \(S_2\) into \(S_{21}, \dots, S_{2p}\), both according to the smallest index \(i\) satisfying the respective degree condition. Finally, we set $V_i = T_i \cup S_{1i} \cup S_{2i}$, then \(V = \bigcup_{i=1}^p V_i\).

With the same partition method as that of Theorem \ref{new}, we have that $|S_{2}|<2(p+1)$ and $|V_i|=nh+a_2k-1$ by noting $N=p(nh+a_2k-1)$. 

\begin{claim}\label{claim45-2}
$|S_{2i}|=0$.
 \end{claim}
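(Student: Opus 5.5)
We argue by contradiction and rerun the Case~1 argument from the proof of Theorem~\ref{new}, now with $N=p(nh+a_2k-1)$. Suppose $S_{2i}\neq\emptyset$ for some $i$. As in Case~1 of Section~3.2, we place every vertex $v\in S_2$ into two parts. One is $S_{2j}$, the smallest index $j$ with $\deg_R(v,T_j)<\zeta|T_j|$. The other is $S'_{2j'}$, the largest such index $j'\neq j$. Set $V'_j=T_j\cup S_{1j}\cup S_{2j}\cup S'_{2j}$.

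Since $\sum_j|V'_j|=N+|S_2|$, averaging gives an index $h_0$ with
\[
|V'_{h_0}|\ge nh+a_2k-1+\frac{|S_2|}{p}.
\]
We also record $|V_i|=nh+a_2k-1$ for every $i$. This is the balancing fact already established: if some part were larger, the Case~2 argument would already produce a blue $K_k+nK_h$.

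To get a strict surplus, we use the assumption directly. The part $V'_i\supseteq V_i$ contains the vertices of $S_{2i}$, and it may also contain vertices of $S'_{2i}$, so $|V'_i|\ge nh+a_2k-1+|S'_{2i}|$. More robustly, since $|S_2|\ge1$, averaging gives a part with $|V'_{h_0}|\ge nh+a_2k$, which is exactly the size used in Case~2. The key point is that vertices of $S_2$ are almost entirely blue to $T_{h_0}$: they have fewer than $\zeta|T_{h_0}|$ red neighbours there. So they behave even better than vertices of $S_{1h_0}$.

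Next we build the blue book inside $V'_{h_0}$. By Claim~\ref{ext-fhw1}(i) and Lemma~\ref{Turan}, there is a blue clique $X\subseteq T_{h_0}$ with $|X|\ge|T_{h_0}|/a_2$. We pass to $X'\subseteq X$, consisting of the vertices that are blue to all of $S_{1h_0}\cup S_{2h_0}\cup S'_{2h_0}$. This loses at most $(a_2-1)|S_1|+2(p+1)\zeta|T_{h_0}|$ vertices, so $|X'|\ge k$. We then choose a blue $k$-clique $K'\subseteq X'$. Each vertex of $K'$ has at most $a_2-1$ red neighbours in $T_{h_0}$, so $K'$ has at least
\[
|V'_{h_0}|-k-k(a_2-1)\ge nh
\]
common blue neighbours, which we call $B_0'$. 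Finally we apply the argument of Claim~\ref{claim nh} to find $n$ disjoint blue $K_h$'s in $B_0'$. It splits $B_0'$ into the near-blue vertices of $S_2$ and the rest, and uses Claim~\ref{r} together with $r(K_{1,a_2},K_{h-1})=a_2(h-2)+1$. This yields a blue $K_k+nK_h$, a contradiction.

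\textbf{Main obstacle.} The delicate step is the counting. The surplus over $nh+a_2k-1$ may be only a single vertex, so the bound $|V'_{h_0}|-ka_2\ge nh$ has to hold exactly, with no slack. We must therefore make sure that the vertices of $K'$, and those of $T_{h_0}$ that are red to $K'$, are only discounted $ka_2$ times in total. In particular, vertices of $S_{2h_0}\cup S'_{2h_0}$ must not be discounted. That is why $K'$ is taken inside $X'$, which is fully blue to those vertices, rather than chosen by the random averaging used in Case~1.
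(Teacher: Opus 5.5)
Your proposal is correct and takes essentially the paper's approach. Like the paper, you get a part with $nh+a_2k$ vertices by counting a near-blue vertex of $S_2$ in a second part, then choose the $k$-clique inside a subset $X'$ that is fully blue to the exceptional vertices (the Case 2 technique), and finish with the argument of Claim~\ref{claim nh}. The only difference is cosmetic: the paper takes one $u\in S_{2i}$ and its second index $j\neq i$ and works in $V_j\cup\{u\}$, which uses $|V_j|=nh+a_2k-1$, whereas you average over the doubled partition $V'_j$, which needs only $|S_2|\ge 1$.
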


\noindent{\bf Proof.}  Suppose to the contrary that $|S_{2i}|\ge 1$. By the definition of $S_{2i}$, any vertex $u\in S_{2i}$ satisfies
$$\deg_R(u,T_j)< \zeta |T_j|$$
for some $j$. Following an argument similar to that in the second case of Theorem \ref{new}, we can find a blue k-clique which has at least $nh$ common blue neighbors in $V_j\cup\{u\}$. Then, using an argument similar to Claim \ref{claim nh}, we can find a blue $K_k+nK_h$, which leads to a contradiction.\hfill$\Box$

\begin{claim}\label{claim-deg-R}
For every $v\in V_i$, $\deg_B(v,T_j)\le (a_2-1)\gamma N+a_2(k-1)$ for any $j\ne i$.
 \end{claim}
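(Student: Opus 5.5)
We argue by contradiction. Fix $v\in V_i$ and $j\ne i$, and suppose that
$$\deg_B(v,T_j)> (a_2-1)\gamma N+a_2(k-1).$$
Let $W=N_B(v,T_j)$. The idea is to build a blue $K_k+nK_h$ whose core is $k$ vertices taken from $W\cup\{v\}$, with the pages inside $V_j\cup\{v\}$. This works because $|V_j|=nh+a_2k-1$, so $V_j$ is only barely too small to host a blue $K_k+nK_h$ by itself. The extremal structure is the one in the lower-bound construction: each class is a disjoint union of red $K_{a_2}$'s, and a $k$-clique misses about $a_2k$ vertices. The extra vertex $v$, blue-adjacent to many vertices of $T_j$, should supply the missing slack.

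\textbf{Step 1 (the core).} Since $\deg_R(w,T_j)\le a_2-1$ for $w\in T_j$ by Claim \ref{ext-fhw1}(i), Lemma \ref{Turan} gives a blue clique $X\subseteq W$ with $|X|\ge |W|/a_2$. Next we remove the vertices of $X$ that have a red neighbour in $S_{1j}$; recall $S_{2j}=\emptyset$ by Claim \ref{claim45-2}. Each vertex of $S_{1j}$ has at most $a_2-1$ red neighbours in $T_j$, and $|S_{1j}|\le|S|\le\gamma N$. Hence at most $(a_2-1)\gamma N$ vertices are removed, and the surviving set $X'$ has size at least $k-1$; this is exactly where the assumed bound is used. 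Choose $k-1$ vertices $K''\subseteq X'$ and set $K=K''\cup\{v\}$, which is a blue $k$-clique.

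\textbf{Step 2 (the common neighbourhood).} The common blue neighbourhood of $K$ in $V_j\setminus K$ contains all of $S_{1j}$. Inside $T_j$, each vertex of $K''$ kills at most $a_2-1$ vertices, so the neighbourhood loses at most $(k-1)(a_2-1)$ vertices of $T_j\setminus K''$ through $K''$. The vertex $v$ may have red neighbours in $T_j$, and controlling these is the main obstacle. A naive count only gives about $|V_j|-(k-1)a_2-\deg_R(v,T_j)$, which is much smaller than $nh$.

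To fix this, I would first use $v$ as a page vertex rather than a core vertex. Take the core entirely inside $X'$: a blue $k$-clique $K\subseteq X'$. Its common blue neighbourhood in $V_j\cup\{v\}$ has size at least
$$|V_j|+1-k-k(a_2-1)=nh+a_2k-1+1-ka_2=nh.$$
Here $v$ belongs to the neighbourhood because $K\subseteq W$. This gives exactly $nh$ vertices, and this is the reason the hypothesis $|W|> (a_2-1)\gamma N+a_2(k-1)$ is needed: after the removals of Step 1 we still need $|X'|\ge k$ (with the counting adjusted appropriately).

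\textbf{Step 3 (the pages).} It remains to partition this common neighbourhood $B_0$, of size at least $nh$, into $n$ disjoint blue $K_h$'s. I would follow the proof of Claim \ref{claim nh}. Vertices of $B_0\cap S_{1j}$ are absorbed using disjoint blue cliques found via Claim \ref{r} together with Lemma \ref{Turan} inside $T_j$. The vertex $v$ is absorbed by choosing $h-1$ vertices in $W\setminus K$ that form a blue clique; these exist because $|W|$ is large and $R[T_j]$ has no red $K_{1,a_2}$, using $r(K_{1,a_2},K_{h-1})=a_2(h-2)+1$. The remainder of $T_j$ splits into blue $h$-cliques as before.

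This yields a blue $K_k+nK_h$, a contradiction. The delicate point is bookkeeping in Step 2: ensuring that $|X'|\ge k$ and that $v$ has at least $h-1$ usable blue neighbours forming a clique. Both follow from the assumed lower bound on $\deg_B(v,T_j)$.
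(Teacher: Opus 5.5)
\textbf{Step 1 fails as written.} Your overall route is the paper's: take a blue core $K_k\subseteq W=N_B(v,T_j)$ that is blue to all of $S_{1j}$. Its common blue neighbourhood in $V_j\cup\{v\}$ then has at least $|V_j|+1-ka_2=nh$ vertices, and you tile it as in Claim~\ref{claim nh}. The problem is the order of operations. Applying Lemma~\ref{Turan} first gives only $|X|\ge |W|/a_2>\frac{a_2-1}{a_2}\gamma N+k-1$. For $a_2\ge 2$ this is less than the $(a_2-1)\gamma N$ vertices you may then delete. So nothing guarantees $X'\neq\emptyset$, let alone the $|X'|\ge k$ that Step~2 needs. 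You must reverse the order, as the paper does:
\begin{itemize}
\item first delete from $W$ the at most $(a_2-1)\gamma N$ vertices having a red neighbour in $S_{1j}$, which leaves at least $a_2(k-1)+1$ vertices of $T_j$;
\item only then use that $R[T_j]$ has no red $K_{1,a_2}$, i.e.\ $r(K_{1,a_2},K_k)=a_2(k-1)+1$, to extract a blue $K_k$.
\end{itemize}
This is precisely where the hypothesis is sharp; ``with the counting adjusted appropriately'' hides it.

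\textbf{Absorbing $v$ is a genuine gap.} Your closing assertion that $v$ having $h-1$ usable blue neighbours forming a clique ``follows from the assumed lower bound'' does not hold in general. The paper's proof, which only says ``similar to Claim~\ref{claim nh}'', has the same gap. Since $|B_0|$ may be exactly $nh$, $v$ must lie in a page. So you need a blue $K_{h-1}$ inside $W\cap B_0$, not merely inside $W\setminus K$, because these vertices must also be blue to the core. As $|W\cap B_0|\ge |W|-ka_2$, this is fine when $a_2\ge 2$, since then $|W|>(a_2-1)\gamma N$ is linear in $N$.

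When $a_2=1$, however, the hypothesis only gives $|W|\ge k$, and the claim is then false for $h,p\ge 2$. Consider the following colouring:
\begin{itemize}
\item colour blue $p$ disjoint cliques $V_1,\dots,V_p$, each of size $nh+k-1$, and colour all edges between them red;
\item then recolour blue the edges from one vertex $v\in V_1$ to a set $A\subseteq V_2$ with $|A|=k+h-2$.
\end{itemize}
The red graph is still $p$-partite, so it contains no red $\mathcal{K}$. Since $k\ge 2$, every blue $k$-clique not contained in $V_2$ has at most $nh-1$ common blue neighbours. Hence a blue $K_k+nK_h$ would have to occupy all $nh+k$ vertices of $V_2\cup\{v\}$, with $v$ in a page. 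That would force $v$ to have $k+h-1$ blue neighbours in $V_2$, but it has only $k+h-2$. In this colouring $S=\emptyset$, and properties (i)--(iv) of the partition force the $T_i$ to be the $V_i$ (up to relabelling). Yet $\deg_B(v,T_2)=k+h-2>k-1$.

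The version valid for all $a_2$ has $a_2(k+h-2)$ in place of $a_2(k-1)$. This weaker bound is all that Claim~\ref{deg-v-U} uses. With it, your Step~3 also works for $a_2=1$: there $T_j$ is a blue clique and $|W\setminus K|\ge h-1$.
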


 \noindent{\bf Proof.}  Suppose to the contrary that $\deg_B(v,T_j)\ge (a_2-1)\gamma N+a_2(k-1)+1$ for some $v\in V_i$ and $j\ne i$.
 By the definition of $S_{1j}$, there exists at least $a_2(k-1)+1$ vertices of $T_j$ which are adjacent to all vertices of $S_{1j}\cup\{v\}$ with blue edges. By Claim \ref{ext-fhw1} (i), $R[T_j]$ contains no red $K_{1,a_2}$. Noting that $a_2(k-1)+1=r(K_{1,a_2},K_k)$, there exists a blue $K_k$ which are adjacent to all vertices of $S_{1j}\cup\{v\}$ with blue edges. Now, applying Claim \ref{ext-fhw1} (i) again, these $k$ vertices have at least
$$|V_j|+1-k-k(a_2-1)\ge nh$$
common blue neighbors. Using an argument similar to Claim \ref{claim nh}, we can find a blue $K_k+nK_h$ in $V_j\cup\{v\}$. A contradiction.
\hfill$\Box$

\begin{claim}\label{deg-v-U}
For every $v\in V_i$, $i=1, 2, \dots, p$, $\deg_R(v,V_i)=a_2-1$.
\end{claim}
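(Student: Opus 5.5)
We prove the two inequalities $\deg_R(v,V_i)\le a_2-1$ and $\deg_R(v,V_i)\ge a_2-1$ separately, for every $v\in V_i$. Throughout we use $|V_i|=nh+a_2k-1$ and, by Claim \ref{claim45-2}, $V_i=T_i\cup S_{1i}$.

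\emph{Upper bound.} Suppose some $v\in V_i$ has $\deg_R(v,V_i)\ge a_2$, and let $A$ be $a_2$ red neighbours of $v$ in $V_i$. We aim to build a red $\mathcal{K}$ with $v$ as the singleton class, $A$ as the class of size $a_2$, and the other classes inside $V_j$, $j\ne i$. By Claim \ref{claim-deg-R}, every vertex of $A\cup\{v\}$ has at most $(a_2-1)\gamma N+a_2(k-1)$ blue neighbours in each $T_j$, $j\ne i$. Removing these blue neighbours leaves a set $T_j'\subseteq T_j$ with $|T_j'|\ge |T_j|-(a_2+1)\big((a_2-1)\gamma N+a_2(k-1)\big)$. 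This is still $(1-O(\gamma))|T_j|$, and every vertex of $T_j'$ is red-complete to $A\cup\{v\}$. It remains to find a red $K_{p-1}(a_3,\dots,a_{p+1})$ in $\bigcup_{j\ne i}T_j'$. By property (iii) of the partition, $e_R(T_j,T_l)\ge (1-(p+1)^2\varepsilon)|T_j||T_l|$, so the red graph between the sets $T_j'$ is nearly complete $(p-1)$-partite. Hence it contains $\Omega(n^{p-1})$ red copies of $K_{p-1}$ with one vertex in each $T_j'$, and Lemma \ref{Erdos} (applied with $p-1$ parts and $a_{p+1}\le\delta n$) produces the required $K_{p-1}(a_3,\dots,a_{p+1})$. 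This contradicts the fact that $R$ is $\mathcal{K}$-free. When $p=1$ the statement reduces to $R[V_1]$ containing no $K_{1,a_2}$, which is immediate.

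\emph{Lower bound.} Suppose some $v\in V_i$ has $\deg_R(v,V_i)\le a_2-2$. Using Claim \ref{ext-fhw1}(i) and Lemma \ref{Turan} exactly as in Case 2 of Theorem \ref{new}, pick a blue clique $X\subseteq T_i$ with $|X|\ge |T_i|/a_2$. We choose a blue $K_k$, call it $K'$, containing $v$ together with $k-1$ vertices of $X$. These $k-1$ vertices must be blue to $v$ and blue to all of $S_{1i}$; this excludes at most $(a_2-2)+(a_2-1)|S_{1i}|$ vertices of $X$, so the choice is possible. We then count common blue neighbours of $K'$ in $V_i$:
\begin{itemize}
\item $v$ misses at most $a_2-2$ vertices;
\item each of the other $k-1$ vertices misses at most $a_2-1$ vertices.
\end{itemize}
Hence the number of common blue neighbours is at least
\[
|V_i|-k-(a_2-2)-(k-1)(a_2-1)=nh+a_2k-1-k-a_2k+a_2+k-1-a_2+2\cdots
\]
Simplifying, this equals $nh+ (a_2k-1)-k-(a_2-2)-(k-1)(a_2-1)=nh$. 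So $K'$ has at least $nh$ common blue neighbours $B_0'$. The argument of Claim \ref{claim nh} then splits $B_0'$ into $n$ disjoint blue $K_h$'s, giving a blue $K_k+nK_h$, a contradiction.

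The main obstacle is the arithmetic in the lower bound, which is tight. Every vertex of $K'$ must have red degree within $V_i$ bounded as stated, and this only holds because $S_{2i}=\emptyset$ and Claim \ref{ext-fhw1}(i) controls the vertices of $T_i$. Vertices of $S_{1i}$ placed in $K'$ would need separate handling; we avoid this by choosing $K'\setminus\{v\}$ inside $T_i$, and by treating the case $v\in S_{1i}$ through the definition of $S_{1i}$, which gives $\deg_R(v,T_i)\le a_2-1$.
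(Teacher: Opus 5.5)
Your proof is correct. The lower-bound half is the paper's argument, but your upper bound takes a different route.

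\textbf{Upper bound.} The paper embeds the rest of $\mathcal{K}$ greedily. Claim \ref{claim-deg-R} holds for \emph{every} vertex of every part, so after fixing the red $K_{1,a_2}$ in $V_i$ one picks $a_3$ vertices in its common red neighbourhood in some $T_j$. Then one picks $a_4$ vertices in the common red neighbourhood, in the next part, of everything chosen so far, and so on. Before the last step at most $1+a_2+\dots+a_p\le 1+(p-1)t$ vertices have been chosen. Each misses at most $(a_2-1)\gamma N+a_2(k-1)$ vertices of the next part, so even the class of size $a_{p+1}\le\delta n$ fits.

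You instead apply Claim \ref{claim-deg-R} only to the $a_2+1$ star vertices. You then extract $K_{p-1}(a_3,\dots,a_{p+1})$ from $\bigcup_{j\ne i}T_j'$ using property (iii) and a second application of Lemma \ref{Erdos}. This is sound. The red transversal copies of $K_{p-1}$ number at least $\bigl(1-\binom{p-1}{2}O(\varepsilon)\bigr)\prod_{j\ne i}|T_j'|$, a $p$-dependent constant fraction of $|\bigcup_{j\ne i}T_j'|^{p-1}$. Since this union has at least $n$ vertices, shrinking $\delta$ in terms of $p,t$ makes the lemma applicable.

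Your route would still work if only pairwise density, not a per-vertex blue-degree bound, were known between the other parts. The paper's greedy argument is more elementary, introduces no new constants, and has no exceptional case. Yours needs $p=2$ treated separately: Lemma \ref{Erdos} as stated requires at least two parts, and there $K_1(a_3)$ is just $a_3$ vertices of $T_j'$.

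\textbf{Lower bound.} You require the $k-1$ companions of $v$ to come from $X\setminus\{v\}$ and be blue to all of $S_{1i}$. This is harmless but unnecessary. Once the upper bound is proved, every vertex of $V_i$ has at most $a_2-1$ red neighbours in $V_i$. That is exactly what the paper uses, and the tiling of $B_0'$ into $n$ blue $K_h$'s needs it anyway. Your closing remark about $v\in S_{1i}$ is beside the point, since the hypothesis $\deg_R(v,V_i)\le a_2-2$ covers that case directly.
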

  \noindent{\bf Proof.} We first prove that $\deg_R(v,V_i)\le a_2-1$.
By symmetry, we will prove the assertion for $i=1$. Suppose to the contrary, there exists a red $K_{1,a_2}$ in $R[V_1]$. By Claim \ref{claim-deg-R}, we have that $\deg_B(v,T_j)\le (a_2-1)\gamma N+a_2(k-1)$ for each $v\in V_1$ and $j\ne 1$. So we obtain
$$
\left|\bigcap_{v\in V(K_{1,a_2})}N_R(v,T_2)\right|\ge|T_2|-(a_2+1)[(a_2-1)\gamma N+a_2(k-1)]\ge a_3.
$$
Thus we can choose $a_3$ vertices from $\bigcap_{v\in V(K_{1,a_2})}N_R(v,T_2)$, which together with the red $K_{1,a_2}$ in $R[V_1]$ yield a red $K_3(1,a_2,a_3)$.
Inductively, we can get a red $\mathcal{K}$, which is a contradiction. Thus, $\deg_R(v,V_i)\le a_2-1$.

Next, we will prove that $\deg_R(v,V_i)=a_2-1$. Suppose to the contrary that there exists some vertex $v\in V_i$ such that $\deg_R(v,V_i)\le a_2-2$, and so $\deg_B(v,V_i)\ge nh+a_2(k-1)$.
Obviously, we can find a blue $k$-clique which contains $v$, and these $k$ vertices have at least
$$|V_i|-k-(a_2-2)-(k-1)(a_2-1)\ge nh$$
common blue neighbors. Similarly, using an argument similar to Claim \ref{claim nh}, we can obtain a blue $K_k+nK_h$. A contradiction.
 \hfill$\Box$

 \begin{claim}\label{claim-deg-R-2}
For every $v\in V_i$, $\deg_B(v,V_j)\le a_2(k+h-2)$ for any $j\ne i$.
 \end{claim}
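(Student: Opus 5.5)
We argue by contradiction, in the same spirit as Claim \ref{claim-deg-R}, but now using the sharper information from Claims \ref{claim45-2} and \ref{deg-v-U}. Suppose some $v\in V_i$ has $\deg_B(v,V_j)\ge a_2(k+h-2)+1$ for some $j\ne i$. By Claim \ref{claim45-2} we have $S_{2}=\emptyset$, so $V_j=T_j\cup S_{1j}$. By Claim \ref{deg-v-U}, every vertex of $V_j$ has exactly $a_2-1$ red neighbours inside $V_j$. Hence $R[V_j]$ is $(a_2-1)$-regular, and $B[V_j]$ is its complement, with $|V_j|=nh+a_2k-1$.

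The goal is to build a blue $K_k+nK_h$ inside $V_j\cup\{v\}$, with the $K_k$ containing $v$ or using $v$ as a page vertex.

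\textbf{Step 1: find a blue $K_{k-1}$ in $N_B(v,V_j)$.} Let $W=N_B(v,V_j)$, so $|W|\ge a_2(k+h-2)+1$. Since $R[W]$ has maximum degree at most $a_2-1$, it contains no red $K_{1,a_2}$. Because $r(K_{1,a_2},K_{k-1})=a_2(k-2)+1\le |W|$, the set $W$ contains a blue $(k-1)$-clique $Q$, and $K:=Q\cup\{v\}$ is a blue $K_k$.

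\textbf{Step 2: count the common blue neighbourhood of $K$.} In $V_j\setminus Q$, each vertex of $Q$ has exactly $a_2-1$ red neighbours. The vertex $v$ has at most $|V_j|-|W|$ red neighbours in $V_j$, a number that is far too large, so this count alone does not give $nh$. The argument therefore has to avoid $v$'s red neighbours cleverly.

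This is the main obstacle: $v$ may be red to almost all of $V_j$, so $v$ cannot serve as a centre vertex. I would instead use $v$ as an \emph{extra page vertex}. Take a blue $K_k$ centre $K'\subseteq V_j$ whose vertices have their red neighbourhoods in $V_j$ disjoint and avoiding $W$ as far as possible. Its common blue neighbourhood in $V_j$ then has size at least $|V_j|-k-k(a_2-1)=nh-1$. Adding $v$, provided $v$ is blue to $K'$, gives $nh$ vertices. We then need the pages: $v$ together with $h-1$ vertices of $W$ forming a blue clique, and the remaining $(n-1)h$ vertices partitioned into blue $K_h$'s.

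The requirements are therefore:
\begin{itemize}
\item $K'\subseteq W$, which needs a blue $K_k$ in $W$;
\item a blue $K_{h-1}$ in $W\setminus K'$ inside the common neighbourhood of $K'$;
\item a blue $K_{h-1}$-partition of the rest.
\end{itemize}
Choosing $K'$ inside $W$ uses $r(K_{1,a_2},K_k)=a_2(k-1)+1$ vertices. The common neighbourhood of $K'$ loses at most $k(a_2-1)$ vertices of $W$. This leaves at least $a_2(k+h-2)+1-k-k(a_2-1)=a_2(h-2)+1=r(K_{1,a_2},K_{h-1})$ vertices, enough for a blue $K_{h-1}$ page with $v$. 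The counts match exactly, which is a strong sign this is the intended route.

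\textbf{Step 3: partition the remaining pages.} The remaining common neighbours of $K'$ must be split into $n-1$ disjoint blue $K_h$'s. Here I would follow the proof of Claim \ref{claim nh}. Take a large blue clique $B_1$ in $T_j$ via Lemma \ref{Turan}, greedily extract blue $K_h$'s from leftover vertices using $\ell=r(K_h,\mathcal{K})$, and absorb the fewer than $\ell$ stragglers into $B_1$, using that each vertex has at most $a_2-1$ red neighbours in $B_1$. This produces a blue $K_k+nK_h$, contradicting the assumption on $B$.

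The delicate point, which I expect to be the real obstacle, is ensuring that the worst case of overlapping red neighbourhoods still leaves the exact count $a_2(h-2)+1$. If this fails by one vertex, I would use the regularity of $R[V_j]$ and a double-counting choice of $K'$ to gain the missing slack.
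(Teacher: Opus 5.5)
Your final route is essentially the paper's proof: a blue $K_k$ centre $K'\subseteq W=N_B(v,V_j)$, then a blue $K_{h-1}$ in $W$ inside the common blue neighbourhood $X$ of $K'$ (where $|X|\ge nh-1$) to form a page with $v$, and the remaining at least $(n-1)h$ vertices of $X$ split into blue $K_h$'s as in Claim~\ref{claim nh}; the paper just extracts the centre and the $h-1$ page vertices in one step as a blue $K_{k+h-1}$ in $W$ via $r(K_{1,a_2},K_{k+h-1})=a_2(k+h-2)+1$. Your closing worry is unfounded, because $|W|-k-k(a_2-1)$ is already a worst-case bound (overlapping red neighbourhoods only help), so no extra slack is needed, and the abandoned Step~1 and the vague ``disjoint red neighbourhoods'' choice of $K'$ can simply be dropped.
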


 \noindent{\bf Proof.} Suppose to the contrary that $\deg_B(v,V_j)\ge a_2(k+h-2)+1$ for some $v\in V_i$ and $j\ne i$.
By Claim \ref{ext-fhw1}(i),  $R[V_j]$ contains no red $K_{1,a_2}$. Noticing that $a_2(k+h-2)+1=r(K_{1,a_2},K_{k+h-1})$, there exists a blue $K_{k+h-1}$ which is adjacent to $v$ with blue edges. 

Now, choosing a blue $k$-clique from $V(K_{k+h-1})$ uniformly at random, which is denoted by $K''$. Denote $U_j=V(K_{k+h-1})\setminus K''$, and let $X$ be the blue neighborhood of $K''$ in $V_j$. Then, $|X|\ge hn-1$ and $|X\setminus U_j|\ge h(n-1)$. By Claim \ref{deg-v-U}, every vertex $x\in X\setminus U_j$ has $a_2-1$ neighbors in $X$. Therefore, Lemma \ref{Turan} implies that there is a blue clique $\mathcal{B}$ with size at least $ |X\setminus U_j|/a_2 \ge h(n-1)/a_2$ in $X$. Following an argument similar to that in Claim \ref{claim nh}, we show that $X\setminus U_j$ contains $n-1$ disjoint blue $h$-cliques, regardless of whether $|(X\setminus U_j) \setminus\mathcal{B}| \ge \ell$ or not. Therefore, vertex $v$ and $h-1$ vertices of  $U_j$ form a blue $K_h$, which together with $K'' \cup (X\setminus U_j)$ yields a blue $K_k+nK_h$. This is impossible.\hfill$\Box$

\medskip
Therefore, from Claim $\ref{deg-v-U}$ and Claim $\ref{claim-deg-R-2}$, each $V_i$ induces a blue $(nh+a_2(k-1)-1)$-regular graph with $|V_i|=nh+a_2k-1$ and $\deg_B(u,V_j) \le a_2(k+h-2)$ for each $u\in V_i$, where $1\le i,j\le p$.
Let $v_0$ be a new vertex outside $V(K_N)$, we claim that if $v_0$ is adjacent to
 $$N'=(p-1)(nh+a_2 k-1)+a_2(k+h-2)+1$$
 vertices in $V(K_N)$, then there exists either a red $K_{p+1}(1,a_2,\dots,a_{p+1})$ or a blue $K_k+nK_h$.
 To avoid a blue $K_k+nK_h$, we must have $\deg_B(v_0, V_i)\le a_2(k+h-2)$. Otherwise, by a similar argument as Claim $\ref{claim-deg-R-2}$, we can find a blue $K_k+nK_h$ in $V_i\cup\{v_0\}$, which leads to a contradiction. Thus $\deg_R(v_0, V_i)\ge 1$. Without loss of generality, assume $\deg_R(v_0, V_1)\le \deg_R(v_0, V_2)\le\cdots\le \deg_R(v_0, V_p)$. Then for $1\le i\le p$, we have
$$\deg_R(v_0, V_i)\ge \frac{(i-1)(nh+2a_2-ha_2-1)+1}{i}.$$

Recall from Claims $\ref{deg-v-U}$ that every vertex $v \in V_i$ satisfies $\deg_R(v, V_i) = a_2 - 1$. Therefore, we can choose one vertex $u_1\in N_R(v_0, V_1)$, yielding a red $K_{1,a_2}$ with center $u_1$.
Note that
$$
\left|N_R(v_0, V_2)\cap N_R(u_1, V_2)\cap \bigcap_{u\in N_R(u_1,V_1)}N_R(u, V_2)\right|\ge \frac{n+2a_2-ha_2}{2}-a_2^2(k+h-2)>a_3.
$$
Hence we can select $a_3$ vertices from this intersection, obtaining a red $K_3(1, a_2, a_3)$. 
Proceeding inductively, we eventually get a red $\mathcal{K}$ , which leads to a contradiction.

 This completes the proof of Theorem \ref{new star}.
  \hfill$\Box$

\ignore{
\section{Acknowledgement}
  This research is partially supported by National Key Research and Development Program of China (Grant No. 2023YFA1010202), NFSC(Grant No. 12571361).
}

\end{spacing}
\end{document}